\theoremstyle{definition}
\newtheorem{convention}[theorem]{Convention}
\theoremstyle{theorem}
\newtheorem{question}[theorem]{Question}
\def\PP{\mathbb{P}}
\def\OO{\mathcal{O}}
\def\FF{\mathbb{F}}
\def\a{\alpha}
\def\b{\beta}
\def\g{\gamma}
\def\del{\partial}
\def\cech{\check{C}}
\def\ub{\underbrace}
\def\on{\operatorname}
\def\Sym{\operatorname{Sym}}
\def\E{\mathcal{E}}
\def\B{\mathbf{B}}
\def\RR{\mathbf{R}}
\def\cR{\mathcal{R}}
\def\F{\mathcal{F}}
\def\G{\mathcal{G}}
\def\C{\mathcal{C}}
\def\Db{\operatorname{D}^{\operatorname{b}}} 
\def\coh{\operatorname{coh}}
\def\cL{\mathcal{L}}
\def\from{\leftarrow}
\def\th{\operatorname{th}}
\def\cT{\mathcal{T}}
\def\cK{\mathcal{K}}
\def\cB{\mathcal{B}}
\def\cW{\mathcal{W}}
\def\cV{\mathcal{V}}
\def\cC{\mathcal{C}}
\def\MR#1{}
\newcommand{\cP}{\mathcal{P}}
\newcommand{\cQ}{\mathcal{Q}}
\newcommand{\red}[1]{{\color{red}#1}}
\title{A short resolution of the diagonal for smooth projective toric varieties of Picard rank 2}
\author{Michael K. Brown}
\address{Department of Mathematics, Auburn University, Auburn, Alabama, 36849}
\email{\href{mailto:mkb0096@auburn.edu}{mkb0096@auburn.edu}}
\urladdr{\url{http://webhome.auburn.edu/~mkb0096/}}
\author{Mahrud Sayrafi}
\address{School of Mathematics, University of Minnesota, Minneapolis, Minnesota, 55455}
\email{\href{mailto:mahrud@umn.edu}{mahrud@umn.edu}}
\urladdr{\url{https://math.umn.edu/~mahrud/}}
\subjclass[2020]{13D02, 14F06, 14F08}
\begin{document}

\begin{abstract}
Given a smooth projective toric variety $X$ of Picard rank 2, we resolve the diagonal sheaf on $X \times X$ by a linear complex of length $\dim{X}$ consisting of finite direct sums of line bundles. As applications, we prove a new case of a conjecture of Berkesch--Erman--Smith that predicts a version of Hilbert's Syzygy Theorem for virtual resolutions, and we obtain a Horrocks-type splitting criterion for vector bundles over smooth projective toric varieties of Picard rank~2, extending a result of Eisenbud--Erman--Schreyer. We also apply our results to give a new proof, in the case of smooth projective toric varieties of Picard rank~2, of a conjecture of Orlov concerning the Rouquier dimension of derived categories.
\end{abstract}
\maketitle


\section{Introduction}

Beilinson's resolution of the diagonal over a projective space is a powerful tool in algebraic geometry \cite{Bei78}. For instance, this resolution may be used to show that the bounded derived category $\Db(\PP^n)$ is generated by the line bundles $\OO, \OO(1), \dots, \OO(n)$. Additionally, taking a Fourier--Mukai transform with kernel given by Beilinson's resolution yields a representation of any object in $\Db(\PP^n)$ as a complex of vector bundles, called a \emph{Beilinson monad}, which has been used to great effect in computational algebraic geometry, e.g.~\cite{ES03,ES09}.

In this paper, we aim to construct a Beilinson-type resolution of the diagonal over a smooth projective toric variety $X$ of Picard rank 2. More specifically, with a view toward proving a new case of a conjecture of Berkesch--Erman--Smith (Conjecture~\ref{conj:BES} below), we construct such a resolution of length $\dim{X}$---the shortest possible length---whose terms are finite direct sums of line bundles. While the existence of a full strong exceptional collection of line bundles \cite{CM04,BH09} implies that $X$ admits a resolution of the diagonal via a tilting bundle construction \cite[Prop.~3.1]{King97}, it follows from a result of Ballard--Favero~\cite[Prop.~3.33]{BF12} that this resolution may have length greater than $\dim{X}$.
Our main result is as follows:

\begin{theorem}\label{thm:main}
Let $X$ be the projective bundle $\PP(\OO \oplus \OO(a_1) \oplus \cdots \oplus \OO(a_s))$ over $\PP^r$, where $1\leq r,s$ and $0 \le a_1 \le \cdots \le a_s$. Denote by $\FF_{a_s}$ the Hirzebruch surface of type $a_s$, and equip $\Pic(\FF_{a_s}) \cong \ZZ^2$ with the basis described in Convention~\ref{conv:basis} below.  There is a complex $R$ of finitely generated graded free modules over the Cox ring of $X \times X$ such that:
\begin{enumerate}
\item $R$ is exact in positive degrees.
\item $R$ is linear, in the sense that there exists a basis of $R$ with respect to which the differentials of $R$ are matrices whose entries are $\kk$-linear combinations of the variables.
\item We have $\on{rank} R_n = \binom{r+s}{n}\dim_\kk H^0(\FF_{a_s}, \OO(r,s))$. In particular, $R$ has length $\dim X = r+s$, and the equality $\on{rank} R_n = \on{rank} R_{r + s - n}$ holds.
\item The sheafification $\cR$ of $R$ is a resolution of the diagonal sheaf $\OO_\Delta$ on $X \times X$.
\end{enumerate}
\end{theorem}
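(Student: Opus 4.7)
The plan is to generalize Beilinson's construction by building $R$ as a Koszul-type complex arising from the projective bundle structure $\pi \colon X = \PP(\OO \oplus \OO(a_1) \oplus \cdots \oplus \OO(a_s)) \to \PP^r$. First I would construct, on $X \times X$, a rank-$(r+s)$ vector bundle $\mathcal{V}$ together with a section $\sigma \in H^0(X \times X, \mathcal{V})$ whose scheme-theoretic vanishing locus is the diagonal $\Delta$. A natural candidate for $\mathcal{V}$ is an extension of two pieces: a rank-$r$ ``horizontal'' piece coming from the Euler sequence on $\PP^r \times \PP^r$ pulled back along the two projections, cutting out the base diagonal, and a rank-$s$ ``vertical'' piece obtained from the relative Euler sequence for $\pi$, cutting out the relative diagonal inside the fiber product $X \times_{\PP^r} X$. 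A dimension count then forces $\sigma$ to cut $\Delta$ out in codimension $r+s$, so $\sigma$ is a regular section away from the irrelevant locus, and the Koszul complex on $\sigma$ produces a sheaf-level resolution $\bigwedge^\bullet \mathcal{V}^\vee \to \OO_\Delta$ of length $r+s = \dim X$.

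Next I would convert this sheaf-level Koszul complex into the desired complex $R$ of finitely generated graded free modules over the bigraded Cox ring $S \otimes_\kk S$ of $X \times X$. The wedge powers $\bigwedge^n \mathcal{V}^\vee$ are locally free but not direct sums of line bundles, so I would replace each by an explicit sum of line bundles presenting it in the relevant multidegree. This is where the factor $\dim_\kk H^0(\FF_{a_s}, \OO(r,s))$ is expected to enter: the cohomology of line bundles on the Hirzebruch surface $\FF_{a_s}$ governs the multiplicity of line bundles needed in the presentation, while the binomial factor $\binom{r+s}{n}$ records the Koszul combinatorics. Linearity of the differentials is then automatic, since $\sigma$ is linear in the Cox variables of each factor and the Koszul differentials are obtained by contraction against $\sigma$.

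The main obstacle is proving exactness of $R$ in positive homological degrees. At the sheaf level, exactness follows from the regularity of $\sigma$ and the fact that $\Delta \subset X \times X$ is a local complete intersection of the correct codimension. The subtlety in passing to modules over $S \otimes_\kk S$ is that spurious homology supported on the irrelevant ideal $B$ could appear; I would handle this by arranging the free presentations of the wedge powers so that the resulting modules are $B$-saturated in the appropriate multigraded sense, ensuring no extra local cohomology is introduced and exactness descends from the sheaf level. The surjection $\cR_0 \to \OO_\Delta$ is then immediate from the Koszul construction, and the rank symmetry $\on{rank} R_n = \on{rank} R_{r+s-n}$ reduces to the combinatorial identity $\binom{r+s}{n} = \binom{r+s}{r+s-n}$ combined with the fact that the Hirzebruch multiplicity factor is independent of $n$.
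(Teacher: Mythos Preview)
Your plan has a structural mismatch with the statement that points to a genuine gap. A Koszul complex on a regular section of a rank-$(r+s)$ bundle $\mathcal{V}$ on $X\times X$ has $\bigwedge^0\mathcal{V}^\vee=\OO_{X\times X}$ in degree $0$, whereas the $R$ being asserted has $\on{rank} R_0=\dim_\kk H^0(\FF_{a_s},\OO(r,s))>1$ as soon as $r,s\ge1$. So $\cR$ cannot literally be such a Koszul complex, and the passage you label ``replace each $\bigwedge^n\mathcal{V}^\vee$ by an explicit sum of line bundles'' is doing all of the work. You do not say what this replacement is. It cannot be a term-by-term identification, since $\bigwedge^n\mathcal{V}^\vee$ is not a sum of line bundles (already for $\PP^n$ the diagonal is not a complete intersection of hypersurfaces). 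If instead you resolve each wedge power by line bundles and totalize, you lose control of the length---this is exactly the tilting-bundle phenomenon the introduction warns about---and you lose the claimed rank formula and linearity. You also never explain why the Hirzebruch surface $\FF_{a_s}$, as opposed to any other auxiliary object, should govern the multiplicities; in your sketch it appears only as an expectation.

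The paper's construction is organized quite differently and sidesteps these issues. One builds a toric fiber bundle $\pi\colon E\to X\times X$ with fiber $\FF_{a_s}$; the four extra Cox variables $u_0,\dots,u_3$ of the fiber serve as homogenizing parameters turning the naive equations $x_i-x_i'$, $y_j-y_j'$ into genuinely $\ZZ^6$-homogeneous elements $\alpha_i,\beta_j$ in the polynomial ring $S_E$. These form a regular sequence, so their Koszul complex $K$ is exact in positive degrees as a complex of modules, and taking the graded strand $R=K(0,0,0,0,r,s)_{(\ast,\ast,\ast,\ast,0,0)}$ over the Cox ring of $X\times X$ gives (1) and (2) for free---no saturation argument is needed. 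Part (3) then becomes a monomial count in $H^0(\FF_{a_s},\OO(r-e,s-f))$ followed by a binomial identity. For (4), the paper does not argue geometrically that a section is regular; instead it produces a chain map $\nu\colon\cR\to\OO_\Delta$ and checks that the induced map of Fourier--Mukai functors $\Phi_\cR\to\Phi_{\OO_\Delta}$ is an isomorphism on the full exceptional collection $\{\OO(b,c):0\le b\le r,\ 0\le c\le s\}$, using the acyclicity criteria for line bundles on $X$ from Corollary~\ref{cor:vanishing}. The Hirzebruch surface is thus not an afterthought but the organizing device for the whole construction.
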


We note that, by a result of Kleinschmidt in \cite{Kleinschmidt88}, every smooth projective toric variety of Picard rank 2 arises as a projective bundle as in the hypothesis of Theorem~\ref{thm:main}. We construct the resolution $\cR$ in Theorem~\ref{thm:main} using a variant of Weyman's ``geometric technique" for building free resolutions, described in \cite[\S5]{Weyman2003}. In a bit more detail: let $x_i$ and $x'_i$ refer to the variables corresponding to the first and second copy of $X$, respectively, in the Cox ring $S$ of $X \times X$. A first, naive, idea is that the diagonal sheaf $\OO_\Delta$ ought to be defined by the relations $x_i - x'_i$ in $S$. The problem is that these relations are not homogeneous with respect to the $\ZZ^4$-grading on $S$. To fix this, we homogenize the relations $x_i - x'_i$ in the Cox ring of a certain toric fiber bundle $E$ over $X \times X$ with fiber given by $\FF_{a_s}$. Our resolution $\cR$ is obtained by taking the Koszul complex on these homogenized relations over $E$, twisting it by a certain line bundle, and pushing it forward to $X \times X$. Choosing the toric fiber bundle $E$ is delicate; not only do the degrees of the variables in the Cox ring of $E$ need to be suitable for homogenizing the relations $x_i - x_i'$, but the terms of the Koszul complex on these homogenized relations must enjoy appropriate cohomological vanishing properties in order to conclude that $\cR$ is a resolution of the required form. See \S\ref{sec:bundle} for details.

The simplest case of Theorem~\ref{thm:main} is the Hirzebruch surface $\FF_{a} = \PP(\OO \oplus \OO(a))$, where $r = s = 1$ and $a = a_1$. As detailed in Example~\ref{ex:hirzebruch}, the construction above yields a resolution of the diagonal for $\FF_{a}$ whose terms $\cR_0$, $\cR_1$, and $\cR_2$ are sums of $a + 4$, $2a + 8$, and $a + 4$ line bundles, respectively (cf.~\cite[\S1]{Buchdahl87}).

As we explain in \S\ref{sec:motivation}, the resolution $\cR$ in Theorem~\ref{thm:main} should be considered as a natural extension of Beilinson's resolution over projective space and similar resolutions due to Buchdahl for Hirzebruch surfaces \cite{Buchdahl87}, Canonaco--Karp for weighted projective stacks \cite{CK08}, and Kapranov for quadrics and flag varieties \cite{Kapranov88}. See \cite[\S4]{BE21} for a related idea, where a resolution of the diagonal---with terms given by infinite direct sums of line bundles---is obtained for any projective toric stack.

We apply Theorem~\ref{thm:main} to make progress on a conjecture concerning virtual resolutions in commutative algebra, a notion introduced by Berkesch--Erman--Smith in \cite{BES20}. We recall that a \emph{virtual resolution} of a graded module $M$ over the Cox ring $S$ of a toric variety $Y$ is a complex $F$ of graded free $S$-modules such that the associated complex of sheaves $\widetilde{F}$ on $Y$ is a locally free resolution of $\widetilde{M}$. The following conjecture predicts a version of Hilbert's Syzygy Theorem for virtual resolutions:

\begin{conjecture}[{\cite[Question~6.5]{BES20}}]\label{conj:BES}
 If $Y$ is a smooth toric variety with Cox ring $S$ and irrelevant ideal $B$, and $M$ is a finitely generated, $B$-saturated $S$-module, then $M$ admits a virtual resolution of length at most $\dim(Y)$.
\end{conjecture}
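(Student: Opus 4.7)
The plan is to prove Conjecture~\ref{conj:BES} by generalizing the resolution-of-diagonal strategy that yields Theorem~\ref{thm:main}: for every smooth toric variety $Y$ of dimension $n$, construct a resolution of the diagonal sheaf $\OO_\Delta$ on $Y \times Y$ by a complex of direct sums of line bundles of length $n$, and then use this resolution as a Fourier--Mukai kernel to produce a virtual resolution of $\widetilde{M}$ of length at most $n$ for any finitely generated graded $S$-module $M$.

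First, I would reduce to the projective case. If $Y$ is affine, the conjecture follows from Hilbert's Syzygy Theorem applied to the Cox ring $S$. For general $Y$, standard toric completion followed by toric resolution of singularities embeds $Y$ as a dense open subvariety of some smooth projective toric variety $\bar{Y}$; a virtual resolution constructed over $\bar{Y}$ restricts to one over $Y$ after one carefully tracks how Cox rings and modules transform under the induced morphism on fans. It should therefore suffice to handle smooth projective toric $Y$.

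Next, for such $Y$, I would construct the required short resolution of the diagonal via a variant of Weyman's geometric technique. Concretely, one seeks a smooth toric variety $E$ equipped with a map $\pi\colon E \to Y \times Y$ that is a projective fiber bundle, together with sections $f_1,\dots,f_n$ of line bundles on $E$ forming a regular sequence whose vanishing locus is the preimage of the diagonal. One then takes the Koszul complex on the $f_i$, twists by a line bundle chosen so that only $H^0$ survives on fibers, and applies $R\pi_*$; the outcome should be a complex $\cR$ of finite direct sums of line bundles on $Y \times Y$ resolving $\OO_\Delta$. Given $M$, the Fourier--Mukai transform $Rp_*(q^*\widetilde{M} \otimes \cR)$ with projections $p,q\colon Y\times Y \to Y$ then computes $\widetilde{M}$ by a complex of direct sums of line bundles of length at most $n$, whose underlying graded modules furnish the desired virtual resolution.

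The hardest step by far is constructing the short resolution of the diagonal for $Y$ of arbitrary Picard rank. For Picard rank $2$, Kleinschmidt's classification yields a projective bundle structure over $\PP^r$ that lets one homogenize the naive equations $x_i - x_i'$ using a Hirzebruch-type fiber bundle, producing exactly $\dim(Y)$ regular relations. For higher Picard rank the combinatorics of the fan is substantially richer, and it is not at all clear how to choose the ambient bundle $E$ and the line bundles carrying the $f_i$ so that the ideal of the diagonal becomes a complete intersection of the correct codimension. Moreover, the Ballard--Favero bound cited in the introduction shows that resolutions coming from tilting bundles or full strong exceptional collections can genuinely exceed length $\dim(Y)$, so any argument must use more than the mere existence of such collections. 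A speculative but natural path would be to induct along primitive collections in the fan of $Y$, realizing $Y$ as a tower of toric fibrations and assembling short resolutions of the diagonals of the base and fiber, but making this compatible with the necessary line bundle twists appears to be the principal obstacle.
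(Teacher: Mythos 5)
The statement you have been asked to prove is a conjecture, and the paper does not prove it either: it establishes only the Picard rank~2 case (Corollary~\ref{cor:rank2}), as a consequence of Theorem~\ref{thm:main}. Your proposal correctly reconstructs the paper's strategy for that case --- build a length-$\dim Y$ resolution of $\OO_\Delta$ by finite direct sums of line bundles via a Koszul complex on a toric fiber bundle over $Y\times Y$, then push a module through the associated Fourier--Mukai transform --- but the step you yourself flag as \emph{the hardest step by far} is precisely the missing content. For Picard rank $\geq 3$ there is no known choice of fibration $E\to Y\times Y$ and regular sequence homogenizing the relations $x_i-x_i'$ that cuts out the diagonal in codimension $\dim Y$; the paper's construction leans essentially on Kleinschmidt's classification, which provides a single projective-bundle structure over $\PP^r$ and lets one homogenize with the four Cox variables of a Hirzebruch fiber, and no analogue of this is available in general. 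Acknowledging that a step is the principal obstacle does not discharge it, so what you have is a research program (one the authors themselves raise in the introduction), not a proof.

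Two further gaps in the steps you do carry out. First, even granting a short resolution $\cR$ of the diagonal, $\RR p_*(q^*\widetilde{M}\otimes\cR)$ is not in general a resolution of $\widetilde{M}$: whenever $H^{>0}(Y,\widetilde{M}\otimes\cL_1)\neq 0$ for some summand $\cL_1\boxtimes\cL_2$ of $\cR$, the higher direct images contribute terms in the wrong homological degrees, so the output is a monad rather than a resolution. The paper's proof of Corollary~\ref{cor:rank2} handles this by first twisting $\widetilde{M}$ by a sufficiently positive line bundle (Fujita vanishing) so that all such higher cohomology vanishes, obtaining a genuine resolution of the twist by the complex $\B(\F(i,j))$, and then untwisting; your sketch omits this entirely. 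Second, your reduction from arbitrary smooth toric $Y$ to the smooth projective case is not routine: a smooth projective completion $\bar{Y}$ has a Cox ring with additional variables and a different grading group, and one must actually exhibit how a virtual resolution over $\on{Cox}(\bar{Y})$ of a suitable extension of $M$ restricts to a virtual resolution of $M$ over $S$. The paper avoids this issue by claiming only the projective case, which is also the only case in which the notion of $\dim Y$ many steps being sharp interacts with the conjecture as stated.
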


This conjecture was proven by Berkesch--Erman--Smith for products of projective spaces in \cite{BES20} (see also \cite[Cor.~2.14]{EES15}) and for monomial ideals in Cox rings of smooth toric varieties by Yang in \cite{Yang21}. As a consequence of Theorem~\ref{thm:main}, we prove the following:

\begin{corollary}\label{cor:rank2}
  Conjecture~\ref{conj:BES} holds for smooth projective toric varieties of Picard rank 2.
\end{corollary}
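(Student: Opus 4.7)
The plan is to build a virtual resolution of $M$ of length $\le \dim X$ by carrying out a Fourier--Mukai-style construction with kernel the resolution $\cR$ from Theorem~\ref{thm:main}. Since tensoring by a line bundle preserves both the virtual-resolution property and its length, I may replace $M$ with any convenient twist $M(\mathbf{d})$; the task therefore reduces to resolving $\F(\mathbf{d}) := \widetilde{M}(\mathbf{d})$, for a suitable $\mathbf{d} \in \operatorname{Pic}(X) \cong \ZZ^2$, by a complex of direct sums of line bundles of length at most $r + s$.

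Let $p_1, p_2 \colon X \times X \to X$ be the two projections. The standard identity $\F(\mathbf{d}) \cong Rp_{2*}\bigl(p_1^* \F(\mathbf{d}) \otimes^{\mathbf{L}} \OO_\Delta\bigr)$ combined with Theorem~\ref{thm:main}(4) gives a quasi-isomorphism $\F(\mathbf{d}) \simeq Rp_{2*}\bigl(p_1^* \F(\mathbf{d}) \otimes \cR\bigr)$. The terms of $\cR$ are finite direct sums of external tensor products $\OO_X(\alpha) \boxtimes \OO_X(\beta)$, with $(\alpha,\beta)$ drawn from a finite list determined by Theorem~\ref{thm:main}. By Serre vanishing on $X$, I can choose $\mathbf{d}$ deep enough in the ample cone that $H^i\bigl(X, \F(\mathbf{d} + \alpha)\bigr) = 0$ for every $i > 0$ and every $\alpha$ occurring in $\cR$. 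Combined with flat base change for the square of projections and the projection formula, this makes the higher direct images $R^i p_{2*}$ of every term of $p_1^* \F(\mathbf{d}) \otimes \cR$ vanish, so $Rp_{2*}$ may be applied term by term. The result is a strict complex $G^\bullet$ of locally free sheaves on $X$ whose $n$-th term has the form $\bigoplus_j H^0\bigl(X, \F(\mathbf{d} + \alpha_{n,j})\bigr) \otimes_\kk \OO_X(\beta_{n,j})$ and which resolves $\F(\mathbf{d})$; by Theorem~\ref{thm:main}(3), its length is at most $r + s = \dim X$.

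To finish, I would lift $G^\bullet$ to the level of modules by replacing each summand $\OO_X(\beta_{n,j})$ with the graded free module $S(\beta_{n,j})$, producing a complex $F^\bullet$ of graded free $S$-modules whose sheafification is $G^\bullet$; thus $F^\bullet$ is a virtual resolution of $M(\mathbf{d})$, and untwisting by $-\mathbf{d}$ yields a virtual resolution of $M$ of length $\le \dim X$. The hard work has already been done in Theorem~\ref{thm:main}, which packs the crucial length bound into the resolution of the diagonal itself; the only step that might look nontrivial is arranging Serre vanishing simultaneously for the finitely many twists appearing in $\cR$, but this is routine once $\mathbf{d}$ is chosen sufficiently deep inside the ample cone of $X$.
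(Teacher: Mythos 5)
Your proposal is correct and follows essentially the same route as the paper: twist $M$ so that all relevant higher cohomology vanishes (the paper invokes Fujita vanishing where you invoke Serre vanishing for the finitely many twists appearing in $\cR$, which is equivalent here), push forward $p_1^*\F(\mathbf{d})\otimes\cR$ term by term to get a linear-algebra--free resolution of $\F(\mathbf{d})$ by sums of line bundles of length $\le \dim X$, and then pass to graded free $S$-modules and untwist. The only cosmetic difference is that the paper packages the pushforward as the Beilinson-type monad $\B(\F(i,j))$ already constructed in \S\ref{sec:FM}.
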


Theorem~\ref{thm:main} also yields a new proof, in the case of smooth projective toric varieties of Picard rank 2, of the following conjecture of Orlov:

\begin{conjecture}[{\cite[Conj.~10]{Orlov09}}]\label{conj:orlov}
  Let $Y$ be a smooth quasi-projective scheme. The Rouquier dimension of the bounded derived category $\Db(Y)$ is equal to $\dim(Y)$.
\end{conjecture}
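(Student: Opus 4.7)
As stated, the conjecture is wide open for arbitrary smooth quasi-projective schemes, so I would not attempt to settle it in full generality. Instead I would address the case that the techniques of this paper put within reach, which is the case the authors actually claim a new proof of: $Y = X$ a smooth projective toric variety of Picard rank $2$. For genuinely general $Y$ no approach is currently available and I would flag this up front rather than pretend otherwise.

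In this case the proof splits into matching inequalities. The lower bound $\dim(X) \le \on{Rdim}(\Db(X))$ holds for any smooth variety by Rouquier's classical estimate, so I would simply cite it and concentrate on the upper bound. For the upper bound the standard mechanism is Fourier--Mukai with kernel $\OO_\Delta$: if the diagonal sheaf on $X \times X$ is resolved by a length-$n$ complex whose terms are finite direct sums of sheaves of the form $\pi_1^* A \otimes \pi_2^* B$, with the first factors $A$ drawn from a fixed finite collection $\{L_j\}$ of line bundles, then for every $\F \in \Db(X)$ the identity $\F \simeq \RR \pi_{2*}(\pi_1^* \F \otimes \OO_\Delta)$ exhibits $\F$ as an iterated cone of at most $n+1$ terms, each of the form $L_j \otimes V_j$ with $V_j$ a coherent sheaf (essentially a direct sum of cohomology groups times $B$). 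Consequently $G = \bigoplus_j L_j$ generates $\Db(X)$ in $n$ steps and $\on{Rdim}(\Db(X)) \le n$.

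The plan is then to feed the resolution $\cR$ produced by Theorem~\ref{thm:main} into this machine. Since $\cR$ has length $n = \dim(X)$ with every term a direct sum of line bundles $\pi_1^* L_j \otimes \pi_2^* M_j$, the first factors range over a finite set of line bundles, and the above argument yields $\on{Rdim}(\Db(X)) \le \dim(X)$. Combined with Rouquier's lower bound this gives the equality asserted by the conjecture in this case.

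The main obstacle is constructing the resolution of the diagonal itself, which is precisely the content of Theorem~\ref{thm:main}; once that is in hand the Rouquier dimension computation is essentially formal, and the finite linear nature of the resolution is what makes the bound sharp. The reason the full Orlov conjecture is out of reach is exactly that such short linear resolutions of the diagonal with line bundle summands are not known, and often not expected, outside of very special classes of varieties (projective spaces, quadrics and flag varieties via Kapranov, weighted projective stacks via Canonaco--Karp, and now smooth projective toric varieties of Picard rank $2$).
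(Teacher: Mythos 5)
Your proposal is correct and matches the paper's treatment: the statement is only a conjecture in general, and the paper likewise claims just the Picard rank~2 case, deducing it from Theorem~\ref{thm:main} via Rouquier's result (\cite[Prop.~7.6]{Rou08}) that a length-$n$ resolution of the diagonal with terms of the form $A \boxtimes B$ bounds the Rouquier dimension by $n$, together with his lower bound $\dim(X) \le \on{Rdim}(\Db(X))$. The Fourier--Mukai mechanism you spell out is exactly the content of that cited proposition, so there is no substantive difference in approach.
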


We refer the reader to the original paper \cite{Rou08} of Rouquier for background on his notion of dimension for triangulated categories. Since the resolution of the diagonal $\cR$ in Theorem~\ref{thm:main} has length $\dim{X}$, and each term $\cR_i$ is a sum of box products of vector bundles on $X$, it is an immediate consequence of \cite[Prop.~7.6]{Rou08} that Theorem~\ref{thm:main} implies Conjecture~\ref{conj:orlov} for smooth projective toric varieties of Picard rank 2. Conjecture~\ref{conj:orlov} was first proven in this case by Ballard--Favero--Katzarkov~\cite[Cor.~5.2.6]{BFK19} using an entirely different approach: they first observe that the conjecture holds for a smooth projective Picard rank 2 toric variety that is weakly Fano, and then they apply descent along admissible subcategories. See the discussion beneath \cite[Conj.~1.1]{BC21} for a list of known cases of Conjecture~\ref{conj:orlov}.

We also apply Theorem~\ref{thm:main} to obtain a splitting criterion for vector bundles on smooth projective toric varieties of Picard rank 2. A famous result of Horrocks~\cite{Horrocks64} states that if a vector bundle on projective space has no intermediate cohomology, then it splits as a sum of line bundles. This splitting criterion has been generalized in many different directions: for instance, to products of projective spaces \cite{CM05,EES15,Schreyer22}, to Grassmannians and quadrics \cite{Ott89}, and to rank 2 vector bundles on Hirzebruch surfaces \cite{FM11, Yas15}, among others. Our splitting criterion for smooth projective toric varieties of Picard rank 2 extends Eisenbud--Erman--Schreyer's for products of projective spaces \cite[Thm.~7.2]{EES15}.

To state the result, we must fix some notation. Given $(a,b), (c,d) \in \ZZ^2$, we write $(a,b) \le (c,d)$  if $a \le c$ and $b \le d$. For a sheaf $\F$ on $X$, let $\g(\F)$ denote its cohomology table:
\[ \g(\F) = \left (\dim_\kk H^i(X, \F(a,b))\right)_{i \ge 0, \text{ } (a,b) \in \ZZ^2}. \]
Here, as in Theorem~\ref{thm:main}, we identify $\Pic{X}$ with $\ZZ^2$ via the choice of basis described in Convention~\ref{conv:basis} below. Our splitting criterion is as follows:

\begin{theorem}\label{thm:splitting}
Let $\E$ be a vector bundle on $X$. Suppose we have
\[ \g(\E) = \sum_{i = 1}^t \g(\OO(b_i, c_i)^{m_i}). \]
If $(b_t, c_t) \le (b_{t-1}, c_{t-1}) \le \cdots \le(b_1, c_1)$, then $\E \cong \bigoplus_{i = 1}^t \OO(b_i, c_i)^{m_i}$.
\end{theorem}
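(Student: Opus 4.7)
The plan is to adapt the Beilinson-monad argument of Eisenbud--Erman--Schreyer \cite[Thm.~7.2]{EES15}, using the resolution $\cR$ from Theorem~\ref{thm:main} in place of the resolution of the diagonal for products of projective spaces.

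The main tool is the Fourier--Mukai equivalence $\Phi_\cR \simeq \id$ on $\Db(X)$ induced by $\cR$. For any $\F \in \Db(X)$, the complex $\Phi_\cR(\F) := \RR p_{2*}(p_1^*\F \otimes^{\L} \cR)$ is the total complex of a bicomplex whose $n$-th column contributes terms of the form $H^i(X, \F(a,b)) \otimes_\kk \OO(a',b')$, indexed by the line-bundle summands $\OO(a,b) \boxtimes \OO(a',b')$ of $\cR_n$. Since $\cR$ resolves $\OO_\Delta$, this total complex is quasi-isomorphic to $\F$. Crucially, the terms of the resulting ``Beilinson monad'' depend only on the cohomology table $\g(\F)$.

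Set $\E' := \bigoplus_{i=1}^t \OO(b_i,c_i)^{m_i}$. Since $\g(\E) = \g(\E')$, the monads for $\E$ and $\E'$ have identical terms in every position. The monad for $\E'$ decomposes as a direct sum over $i$, and each piece collapses to $\OO(b_i,c_i)^{m_i}$ concentrated in cohomological degree $0$. I would then argue by induction on $t$; the case $t = 1$ follows directly from the monad representation. For $t \geq 2$, the cohomology table yields $\dim \on{Hom}(\OO(b_1,c_1),\E) = m_1$, giving a map $\varphi: \OO(b_1,c_1)^{m_1} \to \E$. Using the monad together with the total-ordering hypothesis, one shows $\varphi$ is a fiberwise-injective subbundle inclusion with locally free cokernel $\E''$ satisfying $\g(\E'') = \sum_{i \geq 2} \g(\OO(b_i,c_i)^{m_i})$. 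The key step is then to split the resulting short exact sequence $0 \to \OO(b_1,c_1)^{m_1} \to \E \to \E'' \to 0$: its extension class sits in $\on{Ext}^1(\E'', \OO(b_1,c_1)^{m_1})$, which, via the monad for $\E''$, can be identified with a sum of higher cohomology groups of line bundles $\OO(b_1-b_i, c_1-c_i)$ on $X$ for $i \geq 2$. The total ordering forces these bidegrees into the range where the relevant cohomology vanishes, yielding the splitting and finishing the induction.

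The main obstacle is the final splitting step: one must carefully trace the extension class through the Beilinson monad and verify the requisite cohomology vanishing on $X = \PP(\OO \oplus \OO(a_1) \oplus \cdots \oplus \OO(a_s))$. The explicit structure of $\cR$ from Theorem~\ref{thm:main}---in particular, which line-bundle bidegrees $(a,b) \boxtimes (a',b')$ appear in each $\cR_n$---should pin down precisely which line-bundle cohomology groups on $X$ control the obstruction, and the requisite vanishing should then follow from Bott-type vanishing on $\PP^r$ combined with the Leray spectral sequence for the projection $X \to \PP^r$, together with the total-ordering hypothesis on the $(b_i,c_i)$.
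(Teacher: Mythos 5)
Your overall strategy---using the Beilinson-type monad attached to $\cR$ and inducting to split off $\OO(b_1,c_1)^{m_1}$---is the right one, but the proposal has a genuine gap at exactly the step that carries the weight of the proof. You assert that the evaluation map $\varphi\colon \OO(b_1,c_1)^{m_1}\to\E$ arising from $\dim\on{Hom}(\OO(b_1,c_1),\E)=m_1$ is ``a fiberwise-injective subbundle inclusion with locally free cokernel,'' but you give no argument, and none is available from the cohomology table alone: a collection of global sections of a vector bundle can drop rank along a locus, and ruling this out is precisely the hard part of any Horrocks-type statement. By contrast, the extension-class computation you flag as the main obstacle is comparatively easy: once $\E''\cong\bigoplus_{i\ge 2}\OO(b_i,c_i)^{m_i}$ is known, $\on{Ext}^1(\E'',\OO(b_1,c_1))$ is a sum of groups $H^1(X,\OO(b_1-b_i,c_1-c_i))$ with $(b_1-b_i,c_1-c_i)\ge 0$, which vanish by Corollary~\ref{cor:vanishing}(2)(b). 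So the cohomology vanishing you defer to the end is not where the nef-ordering hypothesis does its real work.

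The paper sidesteps the subbundle issue entirely, following the pattern of \cite[Lem.~7.3]{EES15}: after twisting so that $(b_1,c_1)=(0,0)$, one runs the Beilinson spectral sequence $E_1^{-i,j}(\E)=\RR^j\pi_{2*}(\pi_1^*\E\otimes\cR_i)\Rightarrow\E$, whose first page depends only on $\g(\E)$ and therefore decomposes as $E_1^{-i,j}(\OO)^{m_1}\oplus E_1^{-i,j}(\E')$. Since $E_1^{-i,j}(\OO)$ equals $\OO$ at $(i,j)=(0,0)$ and vanishes elsewhere, and since the hypothesis $\B(\E')_1=0$ kills the diagonal of terms that could receive or emit differentials involving position $(0,0)$, the summand $\OO^{m_1}$ survives to $E_\infty^{0,0}$ and splits off $\E$ directly (Lemma~\ref{lem:induction}). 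The substantive verification, absent from your outline, is that $\B(\OO(a,b))_1=0$ whenever $(a,b)<(0,0)$: one must show $H^{n-1}(X,\OO(a,b)\otimes\cL_1)=0$ for every summand $\cL_1\boxtimes\cL_2$ of $\cR_n$ and every $0<n\le r+s$, which requires the explicit monomial description of those summands (to pin down the possible twists $\cL_1$) together with the case analysis of Corollary~\ref{cor:vanishing} for $n\in\{1,r+1,s+1\}$; this is where the nef ordering, as opposed to the effective ordering, is genuinely needed. If you wish to salvage your route, you must supply a proof that $\varphi$ is a split injection, and the natural way to do so is the spectral-sequence degeneration above---at which point the short exact sequence and its extension class become unnecessary.
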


Our proof of Theorem~\ref{thm:splitting} uses a Beilinson-type spectral sequence built from the resolution of the diagonal in Theorem~\ref{thm:main}. This approach is similar to the technique used by Fulger--Marchitan to obtain a splitting criterion for rank 2 vector bundles on Hirzebruch surfaces \cite{FM11}, which involves a Beilinson-type spectral sequence built from Buchdahl's resolution of the diagonal for Hirzebruch surfaces \cite{Buchdahl87}. See also Aprodu--Marchitan's triviality criterion for vector bundles on Hirzebruch surfaces \cite[Thm.~2]{AM11}, whose proof also involves a Beilinson-type spectral sequence.

When $X = \PP^r \times \PP^s$, Theorem~\ref{thm:splitting} recovers (a special case of) \cite[Thm.~7.2]{EES15}. We note that the nef cone of $X$ is given by $\on{Nef}{X} = \{\OO(a, b) \in \Pic{X} \colon a, b \ge 0\}$, and so $(a, b) \le (c, d)$ if and only if the line bundle $\OO(c - a, d - b)$ is nef. Theorem~\ref{thm:splitting} therefore adds a new wrinkle that is not present on products of projective spaces: we require the twists $(b_i, c_i)$ to be ordered with respect to the nef cone, rather than the effective cone. This distinction is invisible in \cite[Thm.~7.2]{EES15}, as the nef and effective cones of a product of projective spaces coincide.

Motivated by the applications of Theorem~\ref{thm:main} described above, we pose the following:

\begin{question}\label{question}
  Can Theorem~\ref{thm:main} be generalized to any smooth projective toric variety $X$?
\end{question}

The difficulty in generalizing Theorem~\ref{thm:main} is in choosing an appropriate toric fiber bundle $E$ over $X \times X$. A positive answer to Question~\ref{question} would immediately resolve the projective case of Conjecture~\ref{conj:BES} and imply a large swath of new cases of Conjecture~\ref{conj:orlov}.

\subsection*{Overview}

We begin in \S\ref{sec:motivation} by constructing a resolution of the diagonal over $\PP^n$ as the pushforward of a Koszul complex over a certain projective bundle, which illustrates our main approach. We prove Theorem~\ref{thm:main} and Corollary~\ref{cor:rank2} in \S\ref{sec:picard-rank-2}, and we prove Theorem~\ref{thm:splitting} in \S\ref{sec:horrocks}.

\subsection*{Acknowledgments}

We thank Matthew Ballard, Christine Berkesch, Lauren Cranton Heller, David Eisenbud, Daniel Erman, David Favero, and Frank-Olaf Schreyer for valuable conversations. We also thank the referee for many helpful comments that improved this paper. The computer algebra system \texttt{Macaulay2}~\cite{M2} provided indispensable assistance throughout this project. The second author was partially supported by the NSF grant DMS-2001101.

\section{Warm-up: the case of $\PP^n$}\label{sec:motivation}

Throughout the paper, we work over a base field $\kk$. Let $\cT_{\PP^n}$ denote the tangent bundle on $\PP^n$ and $\cW$ the vector bundle $\OO_{\PP^n}(1)\boxtimes\cT_{\PP^n}(-1)$ on $\PP^n\times\PP^n$. There is a canonical section $s\in H^0(\PP^n\times\PP^n, \cW)$ whose vanishing cuts out the diagonal in $\PP^n\times\PP^n$ (see \cite[\S8.3]{Huybrechts2006}). The Koszul complex associated to $s$ yields Beilinson's resolution of the diagonal
\[ 0 \from \OO_\Delta \from \OO_{\PP^n\times\PP^n} \from \Lambda^1 \cW^\vee \from \cdots \from \Lambda^n \cW^\vee \from 0. \]
In this section, we construct another resolution of the diagonal sheaf on $\PP^n\times\PP^n$, whose terms are direct sums of line bundles (cf.~\cite[Rem.~3.3]{CK08}). We explain in Remark~\ref{rem:Pn}(3) a sense in which this resolution resembles Beilinson's. As discussed in the introduction, our approach is similar to Weyman's ``geometric technique'' \cite[\S5]{Weyman2003}. In \S\ref{sec:picard-rank-2}, we explain how the approach in this section extends to smooth projective toric varieties of Picard rank 2.

Let $E$ denote the projective bundle $\PP(\OO\oplus\OO(-1,1))$ on $\PP^n\times\PP^n$ and let $\pi\colon E\to\PP^n\times\PP^n$ be the canonical map. The projective bundle $E$ is a toric variety with $\ZZ^3$-graded Cox ring $S_E = \kk[x_0,\dots,x_n, y_0,\dots,y_n,u_0,u_1]$, where $\deg(x_i)=(1,0,0)$, $\deg(y_i)=(0,1,0)$, $\deg(u_0)=(1,-1,1)$, and $\deg(u_1)=(0,0,1)$. Set $\a_i = u_1x_i - u_0y_i$ for all $i$; the intuition here is that $u_0$ and $u_1$ are homogenizing variables for the non-homogeneous equations $x_i - y_i$. Let $\cK$ denote the Koszul complex on $\a_0, \dots, \a_n$, considered as a complex of sheaves on $E$, and set $\cV = \OO(-1, 0, 0)^{n+1}$. Twisting $\cK$ by $\OO(0,0,n)$ yields a complex of the form
\[
\OO(0,0,n)  \from (\Lambda^1\cV)(0,0,n-1) \from \cdots \from \Lambda^n \cV \from (\Lambda^{n+1}\cV)(0,0,-1).
\]
Using \cite[Ch.~III, Ex.~8.4(a)]{Hartshorne1977} and the projection formula, $\cR = \pi_*\cK(0,0,n)$ has the form
\begin{equation}\label{eq:koszul}
  \Sym^n\cQ \from \Lambda^1\cP \otimes \Sym^{n-1}\cQ \from \cdots \from \Lambda^{n-1}\cP \otimes \Sym^1\cQ \from \Lambda^n\cP,
\end{equation}
where $\cP = \OO(-1, 0)^{n+1}$ and $\cQ = \OO \oplus \OO(-1, 1)$. Notice that applying $\pi_*$ to the $n+1^{\th}$ term $(\Lambda^{n+1}\cV)(0,0,-1)$ of $\cK(0,0,n)$ gives 0, hence the complex \eqref{eq:koszul}  has length $n$.
\begin{proposition}\label{prop:EN}
  The complex $\cR$ is a resolution of the diagonal sheaf on $\PP^n \times \PP^n$. Moreover, the complex $\cR$ is isomorphic to (the sheafification of) the $n^{\th}$ symmetric power of the complex
  \begin{equation}\label{eq:section}
    S(-1, 1) \oplus S  \xgets{\begin{pmatrix} -y_0 & -y_1 & \cdots & -y_n \\ x_0 & x_1 & \cdots & x_n \end{pmatrix}} S(-1, 0)^{n+1},
  \end{equation}
  concentrated in homological degrees $0$ and $1$, where $S$ denotes the Cox ring of $\PP^n \times \PP^n$.
\end{proposition}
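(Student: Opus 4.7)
For assertion (1), my plan is to combine exactness of the Koszul complex $\cK$ on $E$ with vanishing of higher direct images under $\pi$. A local computation on the standard charts $\{u_0 \neq 0\}$ and $\{u_1 \neq 0\}$ of $E$---where the relations $\alpha_i$ become $u_1 x_i - y_i$ and $x_i - u_0 y_i$, respectively---shows that $Z := V(\alpha_0, \dots, \alpha_n)$ has codimension $n+1$ in the smooth $(2n+1)$-dimensional variety $E$; hence $\alpha_0, \dots, \alpha_n$ form a regular sequence and $\cK$ is a resolution of $\OO_Z$. The same local description shows that $\pi|_Z \colon Z \to \PP^n\times\PP^n$ is an isomorphism onto the diagonal $\Delta$. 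Since $\pi$ has relative dimension $1$, the projective bundle formula \cite[Ch.~III, Ex.~8.4(a)]{Hartshorne1977} gives $R^j\pi_*\OO_E(a,b,c) = 0$ for $j > 0$ whenever $c \geq -1$. The $c$-degrees appearing in $\cK(0,0,n)$ range from $n$ down to $-1$, so $\pi_*$ is exact on this complex---and annihilates the final term, which is why $\cR$ has length $n$ rather than $n+1$. Therefore $\cR$ is quasi-isomorphic to $\pi_*\OO_Z(0,0,n)$, which is identified with $\OO_\Delta$ via $\pi|_Z$.

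For assertion (2), I would show that $\cR$ coincides with the $n$-th exterior power of the two-term complex \eqref{eq:section}. The terms already match: expanding $\pi_*\OO_E(-k,0,n-k)^{\binom{n+1}{k}}$ via the projective bundle formula yields $\Lambda^k P \otimes \Sym^{n-k}Q$, which is exactly the $k$-th term in the $n$-th exterior power of any map $P \to Q$. For the differentials, I would use the projection formula to rewrite $H^0(E, \OO_E(1,0,1)) \cong H^0(\PP^n\times\PP^n, Q(1,0))$ and thereby view the tuple $(\alpha_0, \dots, \alpha_n)$ as a map $\phi \colon P \to Q$: under the decomposition of $Q$ corresponding to the fiber basis $\{u_1, u_0\}$, the section $\alpha_i = u_1 x_i - u_0 y_i$ reads off as the $i$-th column of the matrix $M$. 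Since the Koszul differential on $\cK$ is contraction with $\sum_i \alpha_i e_i^*$, its pushforward is contraction with $\phi$, which agrees with the differential on the $n$-th exterior power of $[P \xrightarrow{\phi} Q]$---the degree-$n$ piece of $\Sym(Q) \otimes \Lambda(P)$ equipped with the derivation extending $\phi$.

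The principal technical point will be the matching of differentials in the preceding paragraph: heuristically, contraction with $\alpha$ on $E$ and the Eagon--Northcott-style derivation on $\PP^n\times\PP^n$ are both built from the same data, so they ought to agree automatically, but the verification requires careful bookkeeping of how each $\Sym^{n-k}Q$ factor arises from the monomials $u_0^i u_1^{n-k-i}$ under the projective bundle formula. Once the differentials are understood explicitly, the identification of the degree-$0$ cohomology of $\cR$ with $\OO_\Delta$ can also be read off directly from the complex, providing an alternate route to the last step of assertion (1).
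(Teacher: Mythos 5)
Your proposal is correct in outline but takes a genuinely different route from the paper on both assertions. For the first assertion, the paper does not argue geometrically at all: it defers to a variant of the proof of Theorem~\ref{thm:main}, i.e., it shows that the natural chain map $\nu\colon\cR\to\OO_\Delta$ induces an isomorphism of Fourier--Mukai transforms by testing on the full exceptional collection $\OO(b,c)$, $0\le b,c\le n$. Your argument---$V(\alpha_0,\dots,\alpha_n)$ is a complete intersection mapped isomorphically onto $\Delta$ by $\pi$, the Koszul complex resolves its structure sheaf, and $\RR\pi_*$ is computed termwise because every fiber degree appearing is $\ge -1$---is the geometric technique executed literally, and it is a perfectly good (arguably cleaner) alternative for $\PP^n$, though it does not transfer as readily to the Picard rank $2$ case, which is presumably why the paper adopts the categorical argument. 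Two points you should make explicit to close it: (i) you need the scheme-theoretic zero locus to be the \emph{reduced} diagonal section $Z$ (it is a complete intersection, hence unmixed, so generic reducedness suffices; or check smoothness via the Jacobian on your charts); and (ii) the identification $\pi_*\OO_Z(0,0,n)\cong\OO_\Delta$ requires $\OO_E(0,0,n)|_Z\cong\OO_Z$, which holds because $Z$ is disjoint from $V(u_1)$ (on $Z$, $u_1=0$ forces every $y_i=0$), so $u_1^n$ trivializes the restriction.

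For the second assertion the divergence is sharper: the paper avoids matching differentials entirely. It notes that $R$ and the $n^{\th}$ exterior power $R'$ have identical terms, that both are exact in positive degrees---$R'$ being the generalized Eagon--Northcott complex $\EuScript{C}^n$ of the map \eqref{eq:section}, exact by \cite[Thm.~A2.10(c)]{Eisenbud1995}---and then invokes uniqueness of minimal free resolutions, reducing everything to comparing the cokernels of the two first differentials. Your plan to identify the pushforward of the Koszul contraction with the derivation on $\Sym(Q)\otimes\Lambda(P)$ is the more conceptual statement and, once carried out, subsumes the paper's cokernel check; but it is precisely the bookkeeping you flag as deferred, so as written your sketch of assertion (2) is not yet a proof. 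If you want to bypass that bookkeeping, you still need one external input your proposal does not supply, namely the acyclicity of $R'$ itself (the Eagon--Northcott theorem), after which the uniqueness-of-minimal-resolutions shortcut finishes the argument.
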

\begin{proof}
  One can use a slight variation of the proof of Theorem~\ref{thm:main} below to show that $\cR$ is a resolution of the diagonal. As for the second statement: let $K$ denote the Koszul complex on the regular sequence $\a_0,\dots,\a_n$, considered as a complex of $S_E$-modules. Let $R$ be the complex of $S$-modules given by $K(0,0,n)_{(*,*,0)}$. Since $K$ is exact in positive homological degrees, $R$ is as well. It follows from the description of $\cR$ in \eqref{eq:koszul} that $R$ sheafifies to $\cR$. Let $R'$ denote the $n^{\th}$ symmetric power of \eqref{eq:section}. We observe that $R'$ has exactly the same terms as $R$. The complex $R'$ is precisely the generalized Eagon--Northcott complex of type $\EuScript{C}^n$, as defined in \cite[A2.6]{Eisenbud1995}, associated to the map \eqref{eq:section}. It therefore follows from \cite[Thm.~A2.10(c)]{Eisenbud1995} that $R'$ is exact in positive homological degrees. By the uniqueness of minimal free resolutions, we need only check that the cokernels of the first differentials of $R$ and $R'$ are isomorphic, and this can be verified by direct computation.
\end{proof}

We now compute a well-known example using this approach (cf.~\cite[Ex.~5.2]{King97}).

\begin{example}
  Suppose $n = 2$. The monomials in the $u_i$'s give bases for the symmetric powers of $\cQ$, and the exterior monomials in the $\a_i$'s give bases for the terms of $\cK$, which correspond to the exterior powers of $\cP$. Hence, we may index the summands of \eqref{eq:koszul} by monomials in $u_0, u_1, \a_0, \a_1, \a_2$. With this in mind, the complex \eqref{eq:koszul} has terms
  \[
  \ub{\OO(-2,2)  }_{u_0^2}  \oplus
  \ub{\OO(-1,1)  }_{u_0u_1} \oplus
  \ub{\OO        }_{u_1^2}  \xgets{\del_1}
  \ub{\OO(-2,1)^3}_{\a_0u_0, \a_1u_0, \a_2u_0} \oplus
  \ub{\OO(-1,0)^3}_{\a_0u_1, \a_1u_1, \a_2u_1} \xgets{\del_2}
  \ub{\OO(-2,0)^3}_{\a_0\a_1, \a_0\a_2, \a_1\a_2}
  \]
  and differentials
  \[ \del_1 = \begin{pmatrix}
    -y_0 & -y_1  & -y_2 &  0   &  0   &  0   \\
    x_0  &  x_1  &  x_2 & -y_0 & -y_1 & -y_2 \\
    0    &  0    &  0   &  x_0 &  x_1 &  x_2
  \end{pmatrix}
  \quad \text{and} \quad
  \del_2 = \begin{pmatrix}
    y_1  &  y_2 &  0   \\
    -y_0 &  0   &  y_2 \\
    0    & -y_0 & -y_1 \\
    -x_1 & -x_2 &  0   \\
    x_0  &  0   & -x_2 \\
    0    &  x_0 &  x_1
  \end{pmatrix}. \]
\end{example}

\begin{remark}\label{rem:Pn}
  We conclude this section with the following observations:
  \begin{enumerate}
  \item We have $\on{rank} \cR_i = \on{rank} \cR_{n - i}$, just as in Theorem~\ref{thm:main}.
  \item The resolutions in Theorem~\ref{thm:main} cannot arise as symmetric powers of complexes, in general; this follows immediately from rank considerations.
  \item Let us explain a sense in which our resolution $\cR$ is modeled on Beilinson's resolution of the diagonal. Consider the external tensor product of $\OO(1)$ with the Euler sequence:
    $0 \from \OO(1)\boxtimes\cT(-1) \from \OO(1,0)^{n+1}
    \xgets{\begin{pmatrix} y_0 & \cdots & y_n\end{pmatrix}^T} \OO(1,-1) \from 0.$
    Letting $\cC$ denote the subcomplex $\OO(1,0)^{n+1} \from \OO(1,-1)$ concentrated in degrees 0 and 1, there is a quasi-isomorphism $\cC \xto{\simeq} \OO(1) \boxtimes \cT(-1)$. The morphism \( s\colon\OO\xto{\begin{pmatrix} x_0 & \cdots & x_n \end{pmatrix}^T} \C, \) where $\OO$ lies in degree 0, gives a hypercohomology class in $\mathbb{H}^0(\PP^n\times\PP^n, \C)$, which is isomorphic to $H^0(\PP^n \times \PP^n, \OO(1) \boxtimes \cT(-1))$. By Proposition~\ref{prop:EN}, the $n^{\th}$ symmetric power of the dual of $s$, i.e. the $n^{\th}$ Koszul complex of the dual of $s$ \cite[Definition 2.3]{Kock01}, is isomorphic to the resolution $\cR$. In short: the resolution $\cR$ is a Koszul complex on a section of $\OO(1) \boxtimes \cT(-1)$, just like Beilinson's resolution.
  \end{enumerate}
\end{remark}

\section{Smooth Projective Toric Varieties of Picard rank 2}\label{sec:picard-rank-2}
In this section, we extend the construction in \S\ref{sec:motivation} and prove the main theorem. Let $X$ denote the projective bundle \( \PP(\OO \oplus \OO(a_1) \oplus \cdots \oplus \OO(a_s)) \) over $\PP^r$, where $a_1 \le \cdots \le a_s.$ As discussed in \cite[\S7.3]{CLS2011}, the fan $\Sigma_X \subseteq \ZZ^{r + s}$ of $X$ has $r + s + 2$ ray generators given by the rows 
of the $(r + s + 2) \times (r + s)$ matrix
\begin{equation}\label{eq:X-rays}
  \begin{footnotesize}
    P = \begin{pmatrix}
    -1 & -1 & \cdots & -1 & a_1 & a_2 & \cdots & a_s \\
    1  & 0  & \cdots & 0  & 0   & 0   & \cdots & 0   \\
    0  & 1  & \cdots & 0  & 0   & 0   & \cdots & 0   \\
    \vdots  & \vdots & & \vdots & \vdots & \vdots & & \vdots \\
    0  & 0  & \cdots & 1  & 0   & 0   & \cdots & 0   \\
    0  & 0  & \cdots & 0  & -1  & -1  & \cdots & -1  \\
    0  & 0  & \cdots & 0  & 1   & 0   & \cdots & 0   \\
    0  & 0  & \cdots & 0  & 0   & 1   & \cdots & 0   \\
    \vdots  & \vdots & & \vdots & \vdots & \vdots & & \vdots \\
    0  & 0  & \cdots & 0  & 0   & 0   & \cdots & 1   \\
  \end{pmatrix}
  = \begin{pmatrix}
  \rho_0 \\ \rho_1 \\ \rho_2 \\ \vdots \\ \rho_r \\ \sigma_0 \\ \sigma_1 \\ \sigma_2 \\ \vdots \\ \sigma_s
  \end{pmatrix}
  \end{footnotesize}
\end{equation}
and maximal cones generated by collections of rays of the form
\[ \{\rho_0, \dots, \widehat{\rho_i}, \dots, \rho_r, \sigma_0, \dots,  \widehat{\sigma_j}, \dots, \sigma_s\}. \]

\begin{convention}\label{conv:basis}
Throughout the paper, we equip $\Pic{X} \cong \coker(P) \cong \ZZ^2$ with the basis given by the divisors corresponding to $\rho_0$ and $\sigma_0$. With this choice of basis, we may view the Cox ring of $X$ as the $\ZZ^2$-graded ring $\kk[x_0, \dots, x_r, y_0, \dots, y_s]$ whose variables have degrees given by the columns of the matrix
\[ A = \begin{pmatrix}
1 & 1 & \cdots & 1 & 0 & -a_1 & \cdots & -a_s \\
0 & 0 & \cdots & 0 & 1 & 1 & \cdots & 1 \\
\end{pmatrix}. \]
A main reason we use this convention is that it is also used by the function \texttt{kleinschmidt} in \texttt{Macaulay2}, which produces any smooth projective toric variety of Picard rank 2 as an object of type \texttt{NormalToricVariety}.
\end{convention}

\subsection{Vanishing of sheaf cohomology}\label{sec:vanishing}

We will need a calculation of the cohomology of a line bundle on $X$:

\begin{proposition}\label{prop:vanishing}
  Let $\E$ be the vector bundle $\OO\oplus\OO(a_1)\oplus\cdots\oplus\OO(a_s)$ on $\PP^r$, where $a_1 \le \cdots \le a_s$, so that $X = \PP(\E)$. Write $m = \sum_{i=1}^s a_i$, and consider a line bundle $\OO(k,\ell)$ on $X$. For each $0\le j\le r+s$, we have:
  \[ H^j(X, \OO(k, \ell)) \cong \begin{cases}
    H^j    (\PP^r, \OO_{\PP^r}(k) \otimes \Sym^{\ell}(\E)),             & \ell \ge 0; \\
    H^{j-s}(\PP^r, \OO_{\PP^r}(k-m) \otimes \Sym^{-\ell-s-1}(\E)^\vee), & \ell \le -s -1; \\
    0, & \text{otherwise.}
  \end{cases} \]
\end{proposition}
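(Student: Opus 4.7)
The plan is to realize $\OO_X(k,\ell)$ in terms of the projective bundle structure $\pi\colon X = \PP(\E) \to \PP^r$ and then compute the cohomology by a Leray spectral sequence argument. First I would verify that, under Convention~\ref{conv:basis}, the line bundle $\OO_X(k,\ell)$ corresponds to $\pi^*\OO_{\PP^r}(k) \otimes \OO_{\PP(\E)}(\ell)$, where $\OO_{\PP(\E)}(1)$ is the tautological line bundle in the Hartshorne convention (so that $\pi_* \OO_{\PP(\E)}(1) = \E$). This identification is essentially a bookkeeping check: inspecting the matrix $A$ in Convention~\ref{conv:basis} shows that $H^0(X, \OO(0,1))$ is spanned by the $y_j$'s with twists matching the summands of $\E$, and similarly $\OO(k, 0) = \pi^*\OO_{\PP^r}(k)$.

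With this identified, I would invoke the Leray spectral sequence
\[ E_2^{p,q} = H^p\bigl(\PP^r, \OO_{\PP^r}(k) \otimes R^q\pi_* \OO_{\PP(\E)}(\ell)\bigr) \Rightarrow H^{p+q}(X, \OO(k,\ell)), \]
which uses the projection formula since $\OO_{\PP^r}(k)$ is locally free. The higher direct images of the tautological line bundle are standard (e.g., \cite[Ch.~III, Ex.~8.4]{Hartshorne1977}): for $\ell \ge 0$ one has $\pi_* \OO_{\PP(\E)}(\ell) = \Sym^\ell \E$ with $R^q\pi_* = 0$ for $q > 0$; for $-s \le \ell \le -1$ all $R^q\pi_*$ vanish; and for $\ell \le -s-1$ only $R^s \pi_*$ is nonzero. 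In each of these three cases the spectral sequence degenerates for trivial reasons, giving the claimed identifications in the first two cases directly, and a shift by $s$ in the third.

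To obtain the precise form of $R^s \pi_* \OO_{\PP(\E)}(\ell)$ for $\ell \le -s-1$, I would use relative Serre duality along $\pi$, which gives
\[ R^s \pi_* \OO_{\PP(\E)}(\ell) \;\cong\; \bigl(\pi_*\bigl(\OO_{\PP(\E)}(-\ell) \otimes \omega_\pi\bigr)\bigr)^\vee. \]
The relative canonical is $\omega_\pi = \OO_{\PP(\E)}(-s-1) \otimes \pi^*(\det \E)$, which follows from the relative Euler sequence; combined with $\det \E = \OO_{\PP^r}(m)$, one obtains
\[ R^s \pi_* \OO_{\PP(\E)}(\ell) \;\cong\; \bigl(\Sym^{-\ell-s-1}\E\bigr)^\vee \otimes \OO_{\PP^r}(-m). \]
Tensoring with $\OO_{\PP^r}(k)$ and applying Leray in degree $j$ then yields the stated isomorphism with $\PP^r$-cohomology in degree $j-s$. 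The main technical obstacle is matching conventions: one must check carefully that the tautological line bundle on $\PP(\E)$ in the Hartshorne sense is precisely the one encoded by the column $(0,1)^T$ of the matrix $A$, and that the twist by $\det \E = \OO_{\PP^r}(m)$ in the relative duality formula produces exactly the shift $k \mapsto k - m$ appearing in the statement. Once these conventions are pinned down, the three cases follow from the standard machinery without further calculation.
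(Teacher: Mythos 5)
Your proposal is correct and follows essentially the same route as the paper: identify $\OO_X(k,\ell)$ with $\pi^*\OO_{\PP^r}(k)\otimes\OO_{\PP(\E)}(\ell)$, compute the higher direct images of $\OO_{\PP(\E)}(\ell)$, and observe that the Leray/Grothendieck spectral sequence degenerates in each of the three cases. The only difference is cosmetic: where you derive $\RR^s\pi_*\OO_{\PP(\E)}(\ell)\cong(\Sym^{-\ell-s-1}\E)^\vee\otimes\OO_{\PP^r}(-m)$ via relative Serre duality and the relative Euler sequence, the paper simply cites this standard computation from the literature.
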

\begin{proof}
  Let $\pi\colon X \to \PP^r$ denote the projective bundle map. It follows from a well-known calculation (see e.g.~\cite[4.5(e)]{TT90}) and the projection formula that
  \[ \RR^i\pi_*(\OO(k, \ell)) = \begin{cases}
    \OO_{\PP^r}(k) \otimes \Sym^{\ell}(\E), & i = 0; \\
    \OO_{\PP^r}(k-m) \otimes \Sym^{-\ell-s-1}(\E)^\vee, & i = s; \\
    0, & 0 < i < s.
  \end{cases} \]
  The conclusion follows from the observation that the second page of the Grothendieck spectral sequence
  \[ E^{p,q}_2 = H^p\left(\PP^r, \RR^q \pi_*(\OO(k, \ell))\right) \Rightarrow H^{p + q}(X, \OO(k, \ell)) \]
  collapses to row $q = 0$ when $\ell \ge 0$ and to row $q = s$ when $\ell \le -s-1$. 
\end{proof}

The following result is an immediate consequence of Proposition~\ref{prop:vanishing}. It will play a key role in the proofs of Theorems~\ref{thm:main} and~\ref{thm:splitting}.

\begin{corollary}\label{cor:vanishing}
  Let $X$ be the projective bundle $\PP(\OO\oplus\OO(a_1)\oplus\cdots\oplus\OO(a_s))$ over $\PP^r$ as above, where $a_1\leq\cdots\leq a_s$. Write $m = \sum_{i=1}^s a_i$, and consider a line bundle $\OO(k, \ell)$ on $X$.
  \begin{enumerate}
  \item We have:
    \begin{enumerate}
    \item $H^i(X, \OO(k, \ell)) = 0$ if $i \notin \{0, r, s, r+s\}$.
    \item $H^0(X, \OO(k, \ell)) = 0$ if and only if $\ell < 0$ or $k + a_s \ell < 0$.
    \item If $r \ne s$ then
      \begin{enumerate}
        \item $H^r(X, \OO(k, \ell)) = 0$ if and only if $-r-1 < k$ or $\ell < 0$, and
        \item $H^s(X, \OO(k, \ell)) = 0$ if and only if $-s-1 < \ell$ or $k < m$.
          \end{enumerate}
    \item If $r = s$ then $H^r(X, \OO(k, \ell)) = 0$ if and only if both of the following hold:
      \begin{enumerate}
      \item $-r-1 < k$ or $\ell < 0$, and
      \item $-s-1 < \ell$ or $k < m$.
      \end{enumerate}
    \item Lastly, $H^{r + s}(X, \OO(k, \ell)) = 0$ if and only if either of the following hold:
      \begin{enumerate}
      \item $-r-1 - a_s(\ell + s + 1) + m < k$, or
      \item $-s-1 < \ell$;
      \end{enumerate}
    \end{enumerate}
  \item In particular, the line bundle $\OO(k, \ell)$ is acyclic ($H^i(X, \OO(k, \ell)) = 0$ for $i > 0$) if and only if one of the following holds:
    \begin{itemize}
    \item[(a)] $-s-1 < \ell < 0$,
    \item[(b)] $-r-1 < k$ and $0\le\ell$,
    \item[(c)] $-r-1 - a_s(\ell + s + 1) + m < k < m$ and $\ell \le -s-1$.
    \end{itemize}
  \end{enumerate}
\end{corollary}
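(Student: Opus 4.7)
The proof is essentially a direct application of Proposition~\ref{prop:vanishing} together with the standard computation of line bundle cohomology on $\PP^r$, so I plan to treat it as a careful bookkeeping exercise rather than introduce any new machinery. The main observation driving everything is that the summands of $\Sym^\ell(\E)$ are line bundles $\OO_{\PP^r}(d)$ with $0 \le d \le a_s\ell$, and similarly the summands of $\Sym^{-\ell - s - 1}(\E)^\vee$ are line bundles $\OO_{\PP^r}(-d)$ with $0 \le d \le a_s(-\ell - s - 1)$; within each such decomposition, the vanishing of $H^0$ is controlled by the most positive summand and the vanishing of $H^r$ by the most negative summand.

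For part~(1)(a), I would start by noting that the Grothendieck spectral sequence from the proof of Proposition~\ref{prop:vanishing} has nonzero terms only when $p \in \{0, r\}$ and $q \in \{0, s\}$, since $\PP^r$ has dimension $r$ and only the $0$-th and $s$-th higher direct images are nonzero; hence only the total degrees $0, r, s, r+s$ can contribute. Parts (1)(b)--(e) then follow by plugging into the three cases of Proposition~\ref{prop:vanishing}: for each of $j = 0, r, s, r+s$, I would go through the three regimes $\ell \ge 0$, $-s - 1 < \ell < 0$, and $\ell \le -s - 1$ and record which extremal summand needs to be controlled. For instance, in (1)(b) the case $\ell \ge 0$ gives $H^0(\PP^r, \OO(k) \otimes \Sym^\ell(\E))$, which is nonzero exactly when the top summand $\OO(k + a_s\ell)$ has sections, and the other two regimes contribute nothing to $H^0$; this yields the disjunction $\ell < 0$ or $k + a_s\ell < 0$. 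An analogous analysis, using the most negative summand of $\Sym^{-\ell - s -1}(\E)^\vee$, yields the condition in (1)(e) for $H^{r+s}$.

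For (1)(c) versus (1)(d), the only subtle point is that when $r \ne s$ the two middle-dimensional cohomology groups $H^r$ and $H^s$ come from different rows of the spectral sequence and can be analyzed independently, while when $r = s$ both groups are the same group and contributions from both rows $q = 0$ and $q = s$ must be controlled simultaneously—yielding the conjunction stated in~(1)(d). I would verify this by checking the three regimes of $\ell$ separately and observing that in the range $-s - 1 < \ell < 0$ both cohomology contributions vanish automatically, while the boundary ranges each impose one of the two clauses. Part~(2) is then a purely logical exercise: intersecting the vanishing conditions from (1)(c)--(e) and splitting on the sign/range of $\ell$ produces exactly the three acyclicity regimes (a), (b), (c).

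The only mildly delicate step is the bookkeeping in (1)(d) and (2), where one has to track which conditions are automatic in each range of $\ell$ and verify that the disjunctions and conjunctions combine as claimed; I don't foresee any conceptual obstacle, just the need to be systematic. I would present the argument as a table of cases indexed by the regime of $\ell$ and the cohomological degree, so that each entry is either identified with a standard cohomology of $\OO_{\PP^r}(k + d)$ or observed to vanish for dimensional reasons.
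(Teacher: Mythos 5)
Your proposal is correct and matches the paper's approach: the paper gives no separate proof, asserting the corollary is an immediate consequence of Proposition~\ref{prop:vanishing}, and your bookkeeping---identifying the extremal summands of $\Sym^{\ell}(\E)$ and $\Sym^{-\ell-s-1}(\E)^\vee$ and running through the three regimes of $\ell$---is exactly the intended (and a correct) way to make that immediacy explicit. The only point worth stating in a written version is that the normalization $0 \le a_1 \le \cdots \le a_s$ is being used when you take the extremal twists to be $0$ and $a_s\ell$.
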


\begin{remark}
Conditions (1b) and (1e) are Serre dual to one another. Ditto for the two conditions in (1c), as well as the conditions (i) and (ii) in (1d). These calculations are surely well-known; see, for instance, \cite[Prop.~3.9]{LM11} for a criterion for acyclicity of line bundles on toric varieties. We refer the reader to \cite[Ex.~3.14]{BE21} for a depiction of the regions of $\ZZ^2$ where each $H^i(X, \OO(k, \ell))$ vanishes for the Hirzebruch surface $X=\PP(\OO_{\PP^1}\oplus\OO_{\PP^1}(3))$.
\end{remark}

\subsection{Toric fiber bundles}

Let $E$ and $Y$ be smooth projective toric varieties of dimensions $d_E$ and $d_Y$ associated to fans $\Sigma_E$ and $\Sigma_Y$. Let $\bar\pi\colon\ZZ^{d_E} \to \ZZ^{d_Y}$ be a $\ZZ$-linear surjection that is compatible with the fans $\Sigma_E$ and $\Sigma_Y$, in the sense of \cite[Def.~3.3.1]{CLS2011}, so that it induces a morphism $\pi\colon E \to Y$.
We denote by $F$ the toric variety associated to the fan $\Sigma_F =\{ \sigma \in \Sigma_E \colon\sigma \subseteq \ker(\bar\pi)_\mathbb{R} \}$, and write $d_F = \dim{F}$. Let us assume that the fan $\Sigma_E$ is \emph{split} by the fans $\Sigma_Y$ and $\Sigma_F$, in the sense of \cite[Def.~3.3.18]{CLS2011}. In this case, the map $\pi\colon E \to Y$ is a fibration with fiber $F$; see \cite[Thm.~3.3.19]{CLS2011}.

Writing the Cox rings of $Y$ and $F$ as $S_Y = \kk[x_1, \dots, x_{n_1}]$ and $S_F = \kk[u_1, \dots, u_{n_2}]$, the Cox ring of $E$ has the form $S_E = \kk[x_1, \dots, x_{n_1}, u_1, \dots, u_{n_2}]$. We have presentations $P_Y\colon\ZZ^{d_Y} \to \ZZ^{n_1}$ and $P_F\colon\ZZ^{d_F} \to \ZZ^{n_2}$ of $\Pic{Y}$ and $\Pic{F}$ whose rows are given by the ray generators of $\Sigma_Y$ and $\Sigma_F$, respectively. The analogous presentation of $\Pic{E}$ is of the form
\[ \begin{pmatrix} P_Y & Q \\ 0 & P_F\end{pmatrix} \]
for some $n_1 \times d_F$ matrix $Q$. One may use this presentation to equip $S_E$ with a $\ZZ^e \oplus \ZZ^f$-grading such that $\deg_{S_E}(x_i) = (\deg_{S_Y}(x_i), 0)$, and $\deg_{S_E}(u_i) = (t_i,\deg_{S_F}(u_i))$ for some $t_i \in \ZZ^e$.

\begin{lemma}[cf.~\cite{Hartshorne1977} Ch.~III, Ex.~8.4(a)]\label{lem:cohomology}
  Let $\cL = \OO_E(b_1, \dots, b_{e}, c_1, \dots, c_{f})$, and let $\cB$ be a $\kk$-basis of $H^0(F, \OO_F(c_1, \dots, c_{f}))$ given by monomials in $S_F$. Given $m \in \cB$, denote its degree in $S_E$ by $(d_1^m, \dots, d_e^m, c_1, \dots, c_f)$. We have \( \pi_*(\cL) \cong \bigoplus_{m \in \cB} \OO_Y(b_1 - d_1^m, \dots, b_{e} - d_{e}^m)\).
  Moreover, if $H^i(F, \OO_F(c_1, \dots, c_{f})) = 0$, then $\RR^i\pi_*(\cL) = 0$.
\end{lemma}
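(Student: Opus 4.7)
The statement is a computation of $\pi_*\cL$ and $\RR^i\pi_*\cL$ for a line bundle on a toric fiber bundle. My plan is to use the Cox ring formalism for the formula for $\pi_*\cL$ and a local argument via the splitting $\pi^{-1}(U_\sigma) \cong U_\sigma \times F$ for the vanishing of $\RR^i\pi_*\cL$.

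For the pushforward formula, I would first compute the $\on{Pic}(Y)$-graded $S_Y$-module
\[ \Gamma_*(\pi_*\cL) = \bigoplus_{a \in \on{Pic}(Y)} H^0(Y, \pi_*\cL \otimes \OO_Y(a)) \]
and then appeal to the toric analogue of Serre's theorem (coherent sheaves on projective toric varieties are recovered from their graded global sections) to recover $\pi_*\cL$. The projection formula and the identification $H^0(E, \OO_E(b', c')) = (S_E)_{(b', c')}$ express this module as $\bigoplus_a (S_E)_{(a+b, c)}$. Every monomial $x^\alpha u^\beta$ of bidegree $(a+b, c)$ satisfies $\deg_{S_F}(u^\beta) = c$, which uniquely identifies $u^\beta$ with an element $m \in \cB$; the accompanying factor $x^\alpha$ then has $S_Y$-degree $a + b - d^m$, since $m$ has $S_E$-bidegree $(d^m, c)$. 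Grouping monomials by their $\cB$-component yields the graded isomorphism
\[ \Gamma_*(\pi_*\cL) \cong \bigoplus_{m \in \cB} S_Y(b - d^m), \]
which sheafifies to the desired decomposition.

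For the vanishing of $\RR^i\pi_*\cL$, I would apply flat base change to reduce to computations on each maximal affine chart $U_\sigma \subset Y$. The splitting hypothesis gives $\pi^{-1}(U_\sigma) \cong U_\sigma \times F$, and I would then argue that $\cL|_{\pi^{-1}(U_\sigma)} \cong p_2^*\OO_F(c)$ along this decomposition: since $\OO_Y(b)|_{U_\sigma}$ is trivial (as $U_\sigma$ is a smooth affine toric chart), twisting $\cL$ by $\pi^*\OO_Y(-b)$ does not change its restriction, which reduces us to $\OO_E(0,c)|_{\pi^{-1}(U_\sigma)}$; the splitting of the fan then identifies this with the pullback of $\OO_F(c)$ along $p_2$. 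The Künneth formula, using affineness of $U_\sigma$ to kill higher cohomology on the first factor, then gives
\[ H^i(\pi^{-1}(U_\sigma), \cL|_{\pi^{-1}(U_\sigma)}) \cong H^0(U_\sigma, \OO_{U_\sigma}) \otimes H^i(F, \OO_F(c)), \]
which vanishes whenever $H^i(F, \OO_F(c)) = 0$, yielding $\RR^i\pi_*\cL = 0$ globally.

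The most subtle step is likely the passage from the graded-module isomorphism back to a sheaf-level isomorphism via the toric Serre theorem, which requires verifying the correct sheaf--module adjunction for $Y$. Also nontrivial is the local identification $\OO_E(0,c)|_{\pi^{-1}(U_\sigma)} \cong p_2^*\OO_F(c)$, which relies essentially on the splitting hypothesis on $\Sigma_E$.
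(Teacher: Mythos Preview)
Your argument is correct, and for the vanishing of $\RR^i\pi_*\cL$ it is essentially identical to the paper's: both restrict to a trivializing affine open $U \subset Y$, identify $\cL|_{\pi^{-1}(U)}$ with the pullback of $\OO_F(c)$ along the projection to $F$, and invoke base change.

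For the computation of $\pi_*\cL$, however, your route differs from the paper's. You work globally via the Cox ring: you compute $\Gamma_*(\pi_*\cL)$ as a graded $S_Y$-module by decomposing $(S_E)_{(a+b,c)}$ according to the fiber monomial $u^\beta \in \cB$, and then sheafify. The paper instead writes down an explicit global morphism
\[
g\colon \bigoplus_{m \in \cB} \OO_Y(b - d^m) \longrightarrow \pi_*\cL
\]
given on the $m$-summand by multiplication by $m$, and then checks that $g$ is an isomorphism locally using the \emph{same} trivialization and base change argument already needed for the higher direct images. The paper's approach thus unifies both statements into a single local computation and sidesteps the sheafification step you flagged as subtle (the toric analogue of Serre's theorem, i.e.\ that $\widetilde{\Gamma_*\F} \cong \F$). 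Your approach, on the other hand, makes the combinatorics of the decomposition completely transparent and is arguably more natural if one is already thinking in terms of multigraded modules; the price is exactly the adjunction step you identified, which is standard for smooth projective toric varieties but does require a citation or a line of justification.
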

\begin{proof}
  Let $g\colon\bigoplus_{m \in \cB} \OO_Y(b_1 - d_1^m, \dots, b_{e} - d_{e}^m) \to \pi_*(\cL)$ be the morphism given on the component corresponding to $m \in \cB$ by multiplication by $m$. Let $U$ be an affine open subset of $Y$ over which the fiber bundle $E$ is trivializable; abusing notation slightly, we denote by $\pi$ the map $\pi^{-1}(U) \to U$ induced by $\pi$. To prove the first statement, it suffices to show that the restriction $g_U:\bigoplus_{m \in \cB_i} \OO_U \to  \pi_*(\cL|_U)$
  of $g$ to $U$ is an isomorphism. Without loss of generality, we may assume that $\pi^{-1}(U)  = U \times F$ and that $\pi\colon\pi^{-1}(U) \to U$ is the projection onto $U$. Letting $\g: \pi^{-1}(U) \to F$ denote the projection, we have that $\cL|_U  = \g^*(\OO_F(c_1, \dots, c_{f}))$. Finally, we observe that $g_U$ coincides with the base change isomorphism
  \( \bigoplus_{m \in \cB} \OO_U = \OO_U \otimes_\kk H^0(F, \OO_F(c_1, \dots, c_{f})) \xto{\cong} \pi_*(\g^*(\OO_F(c_1, \dots, c_{f})) = \pi_*(\cL|_U). \)
  As for the last statement: it suffices to observe that, by base change,
  \( \RR^i\pi_*(\cL|_U) \cong \OO_U \otimes_\kk H^i(F, \OO_F(c_1, \dots, c_f)) = 0 \).
\end{proof}


\subsection{Constructing the resolution of the diagonal}\label{sec:bundle}

Let $X$ be as defined at the beginning of this section. We will construct our resolution of the diagonal for $X$ as the pushforward of a certain Koszul complex on a fibration $E$ over $X \times X$ whose fiber is the Hirzebruch surface $\FF_{a_s}$. We begin by constructing the fiber bundle $\pi\colon E \to X \times X$. The ray generators of $E$ are given by the rows of the $(2r + 2s + 8) \times (2r + 2s + 2)$ matrix
\begin{equation}\label{eq:raymatrix}
 \left(\begin{array}{c c | c c}
	P & 0 &  v & -w \\
	0 & P & -v &  w \\
	\hline
	0 & 0 & -1 & a_s \\
	0 & 0 &  0 &  1  \\
	0 & 0 &  1 &  0  \\
	0 & 0 &  0 & -1
\end{array}\right),
\end{equation}
where $P$ is as in \eqref{eq:X-rays}, and $v$ (resp. $w$) is the $(r + s + 2) \times 1$ matrix with unique nonzero entry given by a 1 in the first (resp. $(r+2)^{\th}$) position. Notice that the rows in the top-left quadrant of this matrix are the ray generators of $X \times X$, and the rows in the bottom-right quadrant are the ray generators of $\FF_{a_s}$.

Let $\bar\pi\colon\ZZ^{2r+2s+2} \to \ZZ^{2r+2s}$ denote the projection onto the first $2r+2s$ coordinates. We define the cones of $E$ to be those of the form $\g + \g'$, where $\g$ is a cone corresponding to a cone of $\FF_{a_s}$ and is spanned by a subset of the bottom 4 rows of \eqref{eq:raymatrix}, and $\g'$ is a cone spanned by a collection of the top $2r + 2s + 4$ rows of \eqref{eq:raymatrix} such that $\bar\pi_\mathbb{R}(\g')$ is a cone of $X \times X$. By \cite[Thm.~3.3.19]{CLS2011}, the map $\bar\pi$ induces a fibration $\pi\colon E \to X$ with fiber $\FF_{a_s}$.

In order to describe the Cox ring of $E$, first recall the matrix $A$ from Convention~\ref{conv:basis} whose columns are the degrees of the variables of the Cox ring of $X$, and consider the matrices
\[ B = \begin{pmatrix}
  1 & -a_s & 0 & 0 \\
  0 & 1 & 0 & 0\\
\end{pmatrix}
\quad \text{ and } \quad
C = \begin{pmatrix}
  1 & -a_s & 1 & 0 \\
  0 & 1 & 0 & 1\\
\end{pmatrix}. \]
Notice that the columns of $C$ are the degrees of the variables in the Cox ring of $\FF_{a_s}$. We choose a basis of $\Pic{E} \cong \ZZ^6$ so that the degrees of the variables in the Cox ring
\[ S_E = \kk[x_0, \dots x_r, y_0, \dots, y_s, x_0', \dots, x_r', y_0',\dots, y_s', u_0, \dots, u_3] \]
of $E$ are given by the columns of the Gale dual of \eqref{eq:raymatrix}, which is the $6\times(2r+2s+8)$ matrix
\[ \begin{footnotesize}
  \begin{pmatrix}
    A & 0 &  B \\
    0 & A & -B \\
    0 & 0 &  C \\
  \end{pmatrix} =
  \setcounter{MaxMatrixCols}{20}
  \begin{pmatrix}
    1 & \cdots & 1 & 0 & -a_1 & \cdots & -a_s & 0 & \cdots & 0 & 0 &  0   & \cdots &  0   &  1 & -a_s & 0 & 0 \\
    0 & \cdots & 0 & 1 &  1   & \cdots &  1   & 0 & \cdots & 0 & 0 &  0   & \cdots &  0   &  0 &  1   & 0 & 0 \\
    0 & \cdots & 0 & 0 &  0   & \cdots &  0   & 1 & \cdots & 1 & 0 & -a_1 & \cdots & -a_s & -1 &  a_s & 0 & 0 \\
    0 & \cdots & 0 & 0 &  0   & \cdots &  0   & 0 & \cdots & 0 & 1 &  1   & \cdots &  1   &  0 & -1   & 0 & 0 \\
    0 & \cdots & 0 & 0 &  0   & \cdots &  0   & 0 & \cdots & 0 & 0 &  0   & \cdots &  0   &  1 & -a_s & 1 & 0 \\
    0 & \cdots & 0 & 0 &  0   & \cdots &  0   & 0 & \cdots & 0 & 0 &  0   & \cdots &  0   &  0 &  1   & 0 & 1 \\
  \end{pmatrix}.
\end{footnotesize} \]

Let $K$ be the Koszul complex corresponding to the regular sequence $\a_0,\dots,\a_r,\b_0,\dots,\b_s$ given by the homogeneous binomials
\begin{align*}
  \a_i &= u_2x_i - u_0x_i' \quad \quad \quad \quad \;\; \text{ for } 0 \le i \le r \; \text{ and} \\
  \b_i &= u_3y_i - u_0^{a_s-a_i}u_1u_2^{a_i}y_i' \quad \text{ for } 0 \le i \le s \;\; (a_0 \coloneqq 0)
\end{align*}
in the Cox ring $S_E$. Observe that $\deg(\a_i) = (1,0,0,0,1,0)$ and $\deg(\b_i) = (-a_i,1,0,0,0,1)$. Here, we are using that the columns of $B$ span the effective cone of $X$ to homogenize the relations $x_i - x_i'$ and $y_i - y_i'$. Denote by $\cK$ the complex of sheaves on $E$ corresponding to $K$. The following proposition shows that $\cK$ twisted by $\OO_E(0,0,0,0,r,s)$ is $\pi_*$-acyclic. 

\begin{proposition}\label{prop:higherdirect}
  The higher direct images $\RR^i\pi_*(\cK(0,0,0,0,r,s))$ vanish for $i>0$.
\end{proposition}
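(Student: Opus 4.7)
The plan is to apply Lemma~\ref{lem:cohomology} termwise to the twisted Koszul complex $\cK(0,0,0,0,r,s)$, after verifying the requisite cohomology vanishing on the fiber $\FF_{a_s}$ using Corollary~\ref{cor:vanishing}. Since each term of $\cK$ is a direct sum of line bundles on $E$, it is enough to check that $\RR^i\pi_*$ vanishes for $i>0$ on each such summand.

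First I would identify the summands. In homological degree $p$, the summand of $\cK$ corresponding to a pair of subsets $I\subseteq\{0,\dots,r\}$ and $J\subseteq\{0,\dots,s\}$ with $|I|+|J|=p$ is $\OO_E$ twisted by $-\sum_{i\in I}\deg(\a_i)-\sum_{j\in J}\deg(\b_j)$. Using the degrees $\deg(\a_i)=(1,0,0,0,1,0)$ and $\deg(\b_j)=(-a_j,1,0,0,0,1)$ recorded just before the proposition, the last two coordinates of the degree of the $(I,J)$-summand of $\cK(0,0,0,0,r,s)$---which record its restriction to the fiber $\FF_{a_s}$---are exactly $(r-|I|,\,s-|J|)$. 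As $|I|$ ranges over $\{0,\dots,r+1\}$ and $|J|$ over $\{0,\dots,s+1\}$, the fiber twist $(k,\ell):=(r-|I|,\,s-|J|)$ satisfies $k\ge -1$ and $\ell\ge -1$.

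Next I would apply Corollary~\ref{cor:vanishing}(2) to the Hirzebruch surface $\FF_{a_s}=\PP(\OO_{\PP^1}\oplus\OO_{\PP^1}(a_s))$---so that the corollary's base and fiber dimensions are both~$1$ and $m=a_s$---to conclude that $\OO_{\FF_{a_s}}(k,\ell)$ is acyclic for each such $(k,\ell)$: if $\ell\ge 0$, condition (b) applies because $k\ge -1 > -2$, while if $\ell=-1$, condition (a) applies directly. Lemma~\ref{lem:cohomology} then yields the required vanishing of $\RR^i\pi_*$ on every line bundle summand, hence on every term, of $\cK(0,0,0,0,r,s)$.

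The hard part, if there is one, is really the choice of twist itself: the shift by $(0,0,0,0,r,s)$ is exactly the minimal amount needed to push every fiber degree appearing in the Koszul complex into the acyclic region of $\FF_{a_s}$, and this is why the resolution of the diagonal has the shortest possible length. The remaining work is bookkeeping in the six-dimensional Picard lattice and the degree verifications for the $\a_i$ and $\b_j$, which are already carried out in the preceding discussion.
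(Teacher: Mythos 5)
Your proof is correct and takes essentially the same route as the paper: reduce termwise via Lemma~\ref{lem:cohomology} to the acyclicity of the fiber line bundles $\OO_{\FF_{a_s}}(k,\ell)$ with $-1\le k\le r$ and $-1\le\ell\le s$, which is exactly Corollary~\ref{cor:vanishing}(2)(a)--(b) applied to the Hirzebruch surface. Your explicit bookkeeping of the $(I,J)$-summands is just a more detailed version of the paper's one-line identification of the twists.
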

\begin{proof}
  It suffices to show that $\RR^i\pi_*(\cK_j(0,0,0,0,r,s)) = 0$ for $i>0$ and all $j$. Each term of $\cK(0,0,0,0,r,s)$ is a direct sum of line bundles of the form $\OO_E(a,b,0,0,k,\ell))$ for some $a,b\in\ZZ$, $-1\le k\le r$, and $-1\le\ell\le s$. By Lemma~\ref{lem:cohomology}, we need only show that $H^i(\FF_{a_s}, \OO(k,\ell)) = 0$ for $i>0$ and such $k$ and $\ell$, which follows from Corollary~\ref{cor:vanishing}(2)(a-b).
\end{proof}

Let $S$ denote the Cox ring of $X \times X$ and $R$ the complex of graded $S$-modules given by the subcomplex $K(0,0,0,0,r,s)_{(\ast,\ast,\ast,\ast,0,0)}$ of the Koszul complex $K$ twisted by $S_E(0,0,0,0,r,s)$. We will show that $R$ satisfies the requirements of Theorem~\ref{thm:main}. Observe that, by Lemma~\ref{lem:cohomology}, one can alternatively construct $R$ by applying the twisted global sections functor:
\[ R = \bigoplus_{\cL\in\Pic(X\times X)} H^0(X\times X,\cL\otimes\pi_*\cK(0,0,0,0,r,s)). \]
In particular, writing $\cR$ for the complex of sheaves on $X\times X$ corresponding to $R$, we have $\cR\cong\pi_*\cK(0,0,0,0,r,s)$. Note that Proposition~\ref{prop:higherdirect} implies that $\pi_*\cK(0,0,0,0,r,s)$ is quasi-isomorphic to $\RR\pi_*(\cK(0,0,0,0,r,s))$.

Before discussing some examples, we must establish a bit of notation:

\begin{notation}\label{not:monomials}
Let $S_F = \kk[u_0,u_1,u_2,u_3]$ denote the Cox ring of the Hirzebruch surface $\FF_{a_s}$, equipped with the $\ZZ^2$-grading so that the degrees of the variables correspond to the columns of the matrix $C$ above. Given $i,j\in\ZZ$, let $M_{i,j}$ denote the set of monomials in $S_F$ of degree $(i,j)$. For $m\in M_{i,j}$, let $(d^m_1, d^m_2, d^m_3, d^m_4) \in \ZZ^4$ denote the first four coordinates of the degree of $m$ as an element of the $\ZZ^6$-graded ring $S_E$; notice that $d_3^m = -d_1^m$, and $d_4^m = -d_2^m$.
\end{notation}

\begin{example}\label{ex:firstdiff}
  Let us compute the first differential in $R$. Using the notation above, we have
  \begin{align*}
    R_0    &= \bigoplus_{m \in M_{r,s}} S(-d_1^m, -d_2^m,d_1^m, d_2^m) \cdot m, \quad \text{and} \quad
    R_1     = R^\a_1 \oplus R^\b_1, \quad \text{where} \\
    R_1^\a &= \bigoplus_{i = 0}^r \bigoplus_{m \in M_{r-1, s}} S(-d_1^m - 1, -d_2^m, d_1^m, d_2^m) \cdot \a_i m, \\
    R_1^\b &= \bigoplus_{i = 0}^s \bigoplus_{m \in M_{r, s-1}} S(-d_1^m + a_i, -d_2^m - 1, d_1^m, d_2^m) \cdot \b_i m.
  \end{align*}
  Here, the decorations $``\cdot m"$ in our description of $R_0$ are just for bookkeeping, and similarly for the $``\cdot \a_i m"$ and $``\cdot \b_i m"$ in $R_1$.
 Viewing the differential $\del_1\colon R_1 \to R_0$ as a matrix with respect to the above basis, the column corresponding to $\a_i m$ has exactly two nonzero entries: an entry of $x_i$ corresponding to the monomial $u_2m \in M_{r,s}$ and an entry of $-x_i'$ corresponding to $u_0m \in  M_{r,s}$. Similarly, the column corresponding to $\b_im$ has exactly two nonzero entries: an entry of $y_i$ corresponding to $u_3m$ and an entry of $-y_i'$ corresponding to $u_0^{a_s - a_i}u_1u_2^{a_i}m$. That is, the matrix $\del_1$ has the following form:
 \begin{footnotesize} \[
   \begin{pNiceMatrix}[last-col,last-row,nullify-dots,columns-width=1mm]
          &  0    &        &  0    &        & \Vdots \\
          & -x_i' &        &  0    &        & u_0m   \\
          &  0    &        &  0    &        & \Vdots \\
          &  0    &        & -y_i' &        & u_0^{a_s - a_i}u_1u_2^{a_i} m \\
   \cdots\hspace*{-5mm} &  0    & \hspace*{-5mm}\cdots\hspace*{-5mm} &  0    & \hspace*{-5mm}\cdots & \Vdots \\
          &  x_i  &        &  0    &        & u_2m   \\
          &  0    &        &  0    &        & \Vdots \\
          &  0    &        &  y_i  &        & u_3m   \\
          &  0    &        &  0    &        & \Vdots \\
          & \a_im & \hspace*{-5mm}\cdots\hspace*{-5mm} & \b_im &        &
   \end{pNiceMatrix}.
 \] \end{footnotesize}
 
\end{example}

\begin{example}\label{ex:hirzebruch}
Suppose $X$ is the Hirzebruch surface of type $a$, i.e. the projective bundle $\PP(\OO\oplus\OO(a))$ over $\PP^1$. We have $r=s=1$ and $a_1=a$. The Koszul complex $K$ on $\a_0, \a_1, \b_0, \b_1$, twisted by $(0,0,0,0,1,1)$, looks like:
  \begin{footnotesize}
    \begin{align*}
      \ub{S_E(0,0,0,0,1,1)}_1 &
      \from  \ub{S_E( -1, 0,0,0, 0, 1)^2}_{\a_0,\;\a_1}
      \oplus \ub{S_E(  0,-1,0,0, 1, 0)  }_{\b_0}
      \oplus \ub{S_E(  a,-1,0,0, 1, 0)  }_{\b_1} \\ &
      \from  \ub{S_E( -2, 0,0,0,-1, 1)  }_{\a_0\a_1}
      \oplus \ub{S_E( -1,-1,0,0, 0, 0)^2}_{\a_0\b_0,\;\a_1\b_0}
      \oplus \ub{S_E(a-1,-1,0,0, 0, 0)^2}_{\a_0\b_1,\;\a_1\b_1} \\ & \quad
      \oplus \ub{S_E(  a,-2,0,0, 1,-1)  }_{\b_0\b_1} \\ &
      \from  \ub{S_E(a-2,-1,0,0,-1, 0)  }_{\a_0\a_1\b_1}
      \oplus \ub{S_E( -2,-1,0,0,-1, 0)  }_{\a_0\a_1\b_0}
      \oplus \ub{S_E(a-1,-2,0,0, 0,-1)^2}_{\a_0\b_0\b_1,\;\a_1\b_0\b_1} \\ &
      \from  \ub{S_E(a-2,-2,0,0,-1,-1)  }_{\a_0\a_1\b_0\b_1}.
    \end{align*}
  \end{footnotesize}

  Letting $M_{i, j}$ be as in Notation~\ref{not:monomials} (with $a_s = a$), we have:
  \begin{align*}
    M_{0,0}\;\; &= \{1\},        \hspace{1cm}
    M_{1,0}      = \{u_0, u_2\}, \hspace{1cm}
    M_{0,1}      = \{u_3\} \cup \{u_0^ku_1u_2^{\ell} \colon k + \ell = a \}, \\
    M_{-1,1}    &= \{u_0^ku_1u_2^{\ell} \colon k + \ell = a - 1\}, \\
    M_{1,1}\;\; &= \{u_0u_3, u_2u_3\} \cup \{u_0^ku_1u_2^{\ell} \colon k + \ell = a+1 \}, \\
    M_{i,j}\;\; &= \emptyset \quad \text{for } (i, j) \in \{(1, -1), (-1, 0), (0, -1), (-1, -1)\}.
  \end{align*}

  It follows that the complex $R$ has terms as follows:
  \begin{footnotesize}
    \begin{align*}
      R_0  =& \ub{S( -1,-1, 1,1)  }_{u_0^{a+1} u_1}
      \oplus  \ub{S(  0,-1, 0,1)  }_{u_0^a u_1 u_2} \oplus \cdots
      \oplus  \ub{S(  a,-1,-a,1)  }_{u_1 u_2^{a+1}}
      \oplus  \ub{S( -1, 0, 1,0)  }_{u_0 u_3}
      \oplus  \ub{S(  0, 0, 0,0)  }_{u_2 u_3}, \\
      R_1  =& \ub{S( -1,-1, 0,1)^2}_{\a_0 u_0^a u_1,\; \a_1 u_0^a u_1}
      \oplus  \ub{S(  0,-1,-1,1)^2}_{\a_0 u_0^{a-1} u_1 u_2,\; \a_1 u_0^{a-1} u_1 u_2} \oplus \cdots
      \oplus  \ub{S(a-1,-1,-a,1)^2}_{\a_0 u_1 u_2^a,\; \a_1 u_1 u_2^a}
      \oplus  \ub{S( -1, 0, 0,0)^2}_{\a_0 u_3,\; \a_1 u_3} \\
      \oplus& \ub{S( -1,-1, 1,0)  }_{\b_0 u_0}
      \oplus  \ub{S(a-1,-1, 1,0)  }_{\b_1 u_0}
      \oplus  \ub{S(  0,-1, 0,0)  }_{\b_0 u_2}
      \oplus  \ub{S(a  ,-1, 0,0)  }_{\b_1 u_2}, \\
      R_2  =& \ub{S( -1,-1,-1,1)  }_{\a_0\a_1 u_0^{a-1} u_1}
      \oplus  \ub{S(  0,-1,-2,1)  }_{\a_0\a_1 u_0^{a-2} u_1 u_2} \oplus \cdots
      \oplus  \ub{S(a-2,-1,-a,1)  }_{\a_0\a_1 u_1 u_2^{a-1}}
      \oplus  \ub{S( -1,-1, 0,0)^2}_{\a_0\b_0,\; \a_1\b_0}
      \oplus  \ub{S(a-1,-1, 0,0)^2}_{\a_0\b_1,\; \a_1\b_1}. \\
    \end{align*}
  \end{footnotesize}

  The differentials $\del_1\colon R_0 \gets R_1$ and $\del_2\colon R_1 \gets R_2$ are given, respectively, by the matrices
  \begin{footnotesize}
    \[ \setlength\arraycolsep{2pt}
   \del_1 = \begin{pmatrix}
      -x_0' & -x_1' &  0    &  0    &  0    &  0    & \cdots & 0 & 0 & 0 & 0 &  0    &  0    & -y_0' & 0   &  0    & 0   \\
      x_0   &  x_1  & -x_0' & -x_1' &  0    &  0    & \cdots & 0 & 0 & 0 & 0 &  0    &  0    &  0    & 0   & -y_0' & 0   \\
      0     &  0    &  x_0  &  x_1  & -x_0' & -x_1' & \cdots & 0 & 0 & 0 & 0 &  0    &  0    &  0    & 0   &  0    & 0   \\
      0     &  0    &  0    &  0    &  x_0  &  x_1  & \cdots & 0 & 0 & 0 & 0 &  0    &  0    &  0    & 0   &  0    & 0   \\
      \vdots & \vdots & \vdots & \vdots & \vdots & \vdots & & \vdots & \vdots &
      \vdots & \vdots & \vdots & \vdots & \vdots & \vdots & \vdots & \vdots \\
      0 & 0 & 0 & 0 & 0 & 0 & \cdots & -x_0' & -x_1' &  0    &  0    & 0 & 0 & 0 &  0    & 0 &  0    \\
      0 & 0 & 0 & 0 & 0 & 0 & \cdots &  x_0  &  x_1  & -x_0' & -x_1' & 0 & 0 & 0 & -y_1' & 0 &  0    \\
      0 & 0 & 0 & 0 & 0 & 0 & \cdots &  0    &  0    &  x_0  &  x_1  & 0 & 0 & 0 &  0    & 0 & -y_1' \\
      0 & 0 & 0 & 0 & 0 & 0 & \cdots & 0 & 0 & 0 & 0 & -x_0' & -x_1' & y_0 & y_1 &  0    & 0         \\
      0 & 0 & 0 & 0 & 0 & 0 & \cdots & 0 & 0 & 0 & 0 &  x_0  &  x_1  & 0   & 0   &  y_0  & y_1
    \end{pmatrix},\]
    \[\del_2 =\begin{pmatrix}
      x_1'  &  0    & \cdots & 0     &  y_0' &  0    &  0    &  0    \\
      -x_0' &  0    & \cdots & 0     &  0    &  y_0' &  0    &  0    \\
      -x_1  &  x_1' & \cdots & 0     &  0    &  0    &  0    &  0    \\
      x_0   & -x_0' & \cdots & 0     &  0    &  0    &  0    &  0    \\
      0     & -x_1  & \cdots & 0     &  0    &  0    &  0    &  0    \\
      0     &  x_0  & \cdots & 0     &  0    &  0    &  0    &  0    \\
      \vdots & \vdots & & \vdots & \vdots & \vdots & \vdots\\
      0     &  0    & \cdots &  x_1' &  0    &  0    &  0    &  0    \\
      0     &  0    & \cdots & -x_0' &  0    &  0    &  0    &  0    \\
      0     &  0    & \cdots & -x_1  &  0    &  0    &  y_1' &  0    \\
      0     &  0    & \cdots &  x_0  &  0    &  0    &  0    &  y_1' \\
      0     &  0    & \cdots &  0    & -y_0  &  0    & -y_1  &  0    \\
      0     &  0    & \cdots &  0    &  0    & -y_0  &  0    & -y_1  \\
      0     &  0    & \cdots &  0    & -x_0' & -x_1' &  0    &  0    \\
      0     &  0    & \cdots &  0    &  0    &  0    & -x_0' & -x_1' \\
      0     &  0    & \cdots &  0    &  x_0  &  x_1  &  0    &  0    \\
      0     &  0    & \cdots &  0    &  0    &  0    &  x_0  &  x_1  \\
    \end{pmatrix}. \]
  \end{footnotesize}

  \noindent 
  As predicted by Theorem~\ref{thm:main} parts (2) and (3), the differentials in $R$ are linear; and the ranks of $R_0$, $R_1$, and $R_2$ are $a + 4$, $2a + 8$, and $a + 4$, respectively.
\end{example}

\subsection{A Fourier--Mukai transform}\label{sec:FM}

Let $\pi_1$ and $\pi_2$ denote the projections of $X \times X$ onto $X$, and let $\Phi_\cR$ denote the following Fourier--Mukai transform:
\[\Phi_\cR\colon \Db(X) \xto{\pi_1^*} \Db(X \times X) \xto{\cdot\;\otimes\cR} \Db(X \times X) \xto{\RR\pi_{2*}} \Db(X). \]
We will prove that $\cR$ is a resolution of the diagonal by showing that $\Phi_{\cR}$ is isomorphic to the identity functor, and we will do so by directly exhibiting a natural isomorphism $\Phi_\nu : \Phi_\cR \to \Phi_{\OO_\Delta}$. In fact, we show this by proving that $\Phi_\nu$ induces a quasi-isomorphism on a full exceptional collection. To perform this calculation, we will need an explicit model for the functor $\Phi_{\cR}$, which we present in this section. We refer the reader to \cite[\S8.3]{Huybrechts2006} for further background.

Let $\coh(X)$ denote the category of coherent sheaves on $X$, and suppose $\F_1,\F_2\in\coh(X)$, where $\F_1$ is locally free. By the projection formula and base change, we have canonical isomorphisms
\[ \RR\pi_{2*}(\F_1\boxtimes\F_2) \cong \RR\pi_{2*}\pi^*_1(\F_1)\otimes_{\OO_{X}}\F_2 \cong \RR\Gamma(X, \F_1) \otimes_k \F_2 \]
in $\Db(X)$. Given $\F \in \coh(X)$, we can use this to explicitly compute $\Phi_\cR(\F)$ as follows. Given $\G \in \coh(X)$, let $\cech_\G$ denote the \v{C}ech complex of $\G$ associated to the affine open cover of $X$ arising from the maximal cones in its fan. Consider the following bicomplex, where the horizontal maps are induced by the differentials in $\cR$, the vertical maps are induced by the \v{C}ech differentials, $N$ is the length of $\cR$, and
``$\cL_1 \boxtimes \cL_2 \in \cR_i$'' is shorthand for ``$\cL_1 \boxtimes \cL_2$ is a summand of $\cR_i$'':
\begin{equation}\label{eq:bicomplex}
  \xymatrix{
    0 & \ar[l] \bigoplus_{\cL_1\boxtimes\cL_2 \in \cR_0} \cech_{\F\otimes\cL_1} \otimes \cL_2 & \ar[l] \cdots
    &   \ar[l] \bigoplus_{\cL_1\boxtimes\cL_2 \in \cR_N} \cech_{\F\otimes\cL_1} \otimes \cL_2 & \ar[l] 0}.
\end{equation}
Since the differentials of $\cech_\G$ have entries in $\kk$, the columns of \eqref{eq:bicomplex} split. Thus, we may apply \cite[Lem.~3.5]{EFS03} to conclude that the totalization of \eqref{eq:bicomplex} is homotopy equivalent to a complex $\B(\F)$ concentrated in degrees $k=-N,\dots,N$ with terms
\begin{equation}\label{eq:BM}
  \B(\F)_k
  = \bigoplus_{i-j=k} \bigoplus_{\cL_1\boxtimes\cL_2 \in \cR_i} H^j(X, \F\otimes\cL_1) \otimes\cL_2
  \cong \bigoplus_{i-j=k} \RR^j{\pi_2}_*(\pi_1^*\F\otimes\cR_i).
\end{equation}
The terms of $\B(\F)$ arise from the totalization of the vertical homology of \eqref{eq:bicomplex}. 

Over projective space, the analogue of this Fourier--Mukai transform involving Beilinson's resolution of the diagonal is called the Beilinson monad (see e.g.~\cite{EFS03}), hence the notation $\B(-)$. Note that ``the'' complex $\B(\F)$ is only well-defined up to homotopy equivalence, since the differential depends on a choice of splitting of the columns in the bicomplex \eqref{eq:bicomplex}. More precisely, for each term $Y_{i,j}$ of \eqref{eq:bicomplex}, choose a decomposition $Y_{i,j} = B_{i,j} \oplus H_{i,j} \oplus L_{i,j}$ such that $B_{i,j} \oplus H_{i,j} = Z^{\on{vert}}_{i,j}$, where $Z^{\on{vert}}_{i,j}$ denotes the vertical cycles in $Y_{i,j}$. Notice that there is a canonical isomorphism $H_{i,j} \cong \bigoplus_{\cL_1 \boxtimes \cL_2\in \cR_i}  H^{-j}(\F \otimes \cL_1) \otimes \cL_2 $. Let $\sigma_H\colon Y_{\bullet, \bullet} \to H_{\bullet, \bullet}$ and $\sigma_B\colon Y_{\bullet, \bullet} \to B_{\bullet, \bullet}$ denote the projections, let $g\colon L_{\bullet, \bullet} \xto{\cong} B_{\bullet ,\bullet -1}$ denote the isomorphism induced by the vertical differential, and let $\pi = g^{-1} \sigma_B$. By \cite[Lem.~3.5]{EFS03}, the differential on $\B(\F)$ is given by
\[ \del_{\B(\F)} = \sum_{i \ge 0} \sigma_H(d_{\on{hor}} \pi)^i d_{\on{hor}}, \]
where $d_{\on{hor}}$ is the horizontal differential in the bicomplex \eqref{eq:bicomplex}.

\begin{remark}
  The $i = 0$ term in the formula for $\del_{\B(\F)}$ is simply the map induced by the differential on $\cR$; it is independent of the choices of splittings of the columns of \eqref{eq:bicomplex}. Since this is the only part of the differential on $\B(\F)$ that we will need to explicitly compute, we will ignore the ambiguity of $\B(\F)$ up to homotopy equivalence from now on.
\end{remark}

\subsection{Proof of Theorem~\ref{thm:main}}\label{sec:proof-subsection}

\begin{proof}
  To prove parts (1) and (2), first recall that $R$ is the direct sum of the degree $(d_1,d_2,d_3,d_4,0,0)$ components of $K(0,0,0,0,r,s)$ for all $d_1,\dots,d_4\in\ZZ$. Thus, since $K$ is exact in positive homological degrees, $R$ is as well; moreover, the differentials of $R$ are linear\footnote{Free complexes that are linear in the sense of Theorem~\ref{thm:main}(2) are called \emph{strongly linear} in \cite{BE22}.}. We now check that $R$ has property (3). For all $k, \ell \in \ZZ$, we have
\begin{equation}\label{eq:H0}
  \dim_\kk H^0(\FF_{a_s}, \OO(k, \ell)) = \begin{cases}
    (k + 1)(\ell + 1) + \binom{\ell + 1}{2}a_s, & \ell \ge 0; \\
    0,                                          & \ell  <  0.
  \end{cases}
\end{equation}
We now compute:
\begin{align*}
\on{rank} \cR_n &= \sum_{ i = 0}^n \binom{r+1}{i} \binom{s + 1}{n - i} \dim_\kk H^0(\FF_{a_s}, \OO(r - i, s- (n - i))) \\
&= \sum_{i = 0}^r \binom{r+1}{i} \binom{s + 1}{n - i} \left( (r -i+1)(s - (n - i) + 1) + \binom{s - (n - i) +1}{2}a_s \right)  \\
&+ \binom{s+1}{n - (r+1)} \binom{s - (n - (r+1)) + 1}{2}a_s \\
&=  \sum_{ i = 0}^r \binom{r}{i} \binom{s}{n - i}(r+1)(s+1) +  \sum_{ i = 0}^r \binom{r+1}{i} \binom{s - 1}{n - i} \binom{s + 1 }{2}a_s \\
&+ \binom{s-1}{n - (r+1)} \binom{s + 1}{2}a_s\\
&=  \sum_{ i = 0}^r \binom{r}{i} \binom{s}{n - i}(r+1)(s+1) + \sum_{ i = 0}^{r+1} \binom{r+1}{i} \binom{s - 1}{n - i} \binom{s + 1 }{2}a_s \\
&= \binom{r + s}{n} \dim_\kk H^0(\FF_{a_s}, \OO(r, s)).
\end{align*}
The first equality follows from the definition of $\cR$, the second from \eqref{eq:H0}, the third from some straightforward manipulations, the fourth by combining the second and third terms, and the last by Vandermonde's identity and the equality $\dim_\kk H^0(\FF_{a_s}, \OO(r, s)) = (r+1)(s+1) + \binom{s+1}{2}a_s$. This proves (3).

Finally, we check property (4): namely, that the cokernel of the differential $\del_1\colon\cR_1\to\cR_0$ is $\OO_\Delta$. Just as in the proof of \cite[Prop.~3.2]{CK08}, we will prove that $\cR$ is a resolution of $\OO_\Delta$ by showing there is a chain map $\cR\to\OO_\Delta$ that induces a natural isomorphism on certain Fourier--Mukai transforms. In detail: given any $i,j\in\ZZ$, there is a natural map $\OO(i,j,-i,-j)\to\OO_\Delta$ given by multiplication. These maps determine a natural map $\nu_0\colon\cR_0\to\OO_\Delta$, and it is clear from the description of $\del_1$ in Example~\ref{ex:firstdiff} that $\nu_0$ determines a chain map $\nu\colon\cR\to\OO_\Delta$. Recall that $\Phi_{\cR}$ denotes the Fourier--Mukai transform associated to $\cR$. To show that $\nu$ is a quasi-isomorphism, we need only prove that the induced natural transformation $\Phi_\nu \colon\Phi_\cR \to \Phi_{\OO_\Delta}$ on Fourier--Mukai transforms is a natural isomorphism; indeed, this immediately implies that $\Phi_{\on{cone}(\nu)}$ is isomorphic to the 0 functor, and so $\on{cone}(\nu) = 0$ by \cite[Lem.~2.1]{CK08}.

The category $\Db(X)$ is generated by the line bundles $\OO(b,c)$ with $0 \le b \le r$ and $0 \le c \le s$; in fact, these bundles form a full exceptional collection in $\Db(X)$ \cite[Cor.~2.7]{Orlov93}. Since $\Phi_{\OO_\Delta}$ is the identity functor, we need only show that the map $\Phi_\cR(\OO(b, c)) \to \OO(b,c)$ induced by $\Phi_\nu$ is an isomorphism in $\Db(X)$.

Say $\OO(d_1, d_2, d_3, d_4)$ is a summand of $\cR$. We first show that the line bundle $\OO(d_1 + b, d_2 + c)$ on $X$ is acyclic, i.e. $H^i(X,\OO(d_1 + b, d_2 + c)) = 0$ for $i > 0$. Say the summand $\OO(d_1, d_2, d_3, d_4)$ of $\cR$ corresponds to the monomial $\a_{i_1} \cdots \a_{i_k} \b_{j_1} \cdots \b_{j_\ell} m$, where $k \le r + 1$, $\ell \le s + 1$, and $m \in M_{r-k, s - \ell}$. It follows that $d_1 = -k - t_1$ and $d_2 = -\ell - t_2$ for some $t_1 \le r - k$ and $t_2 \le s - \ell$. In particular, we have $d_1 + b \ge d_1 \ge -r$, and $d_2 + c \ge d_2 \ge -s$. Thus, $\OO(d_1 + b, d_2 + c)$ satisfies either (a) or (b) in Corollary~\ref{cor:vanishing}(2), and so $\OO(d_1 + b, d_2 + c)$ is acyclic.

Recall from \S\ref{sec:FM} that, given any sheaf $\F$ on $X$, $\Phi_\cR(\F)$ may be modeled explicitly as the complex $\B(\F)$. The previous paragraph implies that the terms in $\B(\OO(b,c))$ involving higher cohomology vanish; that is, the nonzero terms of $\B(\OO(b,c))$ are of the form $H^0(\cL_1(b,c)) \otimes \cL_2$, where $\cL_1 \boxtimes \cL_2$ is a summand of $\cR$. In particular, $\B(\OO(b,c))$ is concentrated in nonnegative degrees, the map $\B_0(\OO(b, c)) \to \OO(b,c)$ induced by $\nu$ is the natural multiplication map, and the differential on $\B(\OO(b,c))$ is induced by the differential on $\cR$. It follows that $\B(\OO(b,c))$ is exact in positive degrees, since $\cR$ has this property. We now show, by direct computation, that the induced map $H_0(\B(\OO(b,c))) \to \OO(b,c)$ is an isomorphism.

It follows from our explicit descriptions of the terms $R_0$ and $R_1$ in Example~\ref{ex:firstdiff} that
\begin{align*}
  \B(\OO(b,c))_0 &= \bigoplus_{m \in M_{r,s}} H^0(X, \OO(b - d_1^m, c - d_2^m)) \otimes \OO(d_1^m, d_2^m) \cdot m, \quad \text{and} \\
  \B(\OO(b,c))_1 &= \B(\OO(b,c))_1^\a \oplus \B(\OO(b,c))_1^\b, \quad \text{where} \\
  \B(\OO(b,c))^\a_1 &= \bigoplus_{i = 0}^r \bigoplus_{ m \in M_{r -1, s}} H^0(X, \OO(b -d_1^m -1,c-d_2^m)) \otimes \OO(d_1^m, d_2^m)\cdot \a_i m, \\
  \B(\OO(b,c))^\b_1 &= \bigoplus_{i = 0}^s \bigoplus_{ m \in M_{r , s-1}} H^0(X, \OO(b -d_1^m +a_i,c-d_2^m-1)) \otimes \OO(d_1^m, d_2^m)\cdot \b_i m.
\end{align*}
We represent the first differential on $\B(\OO(b,c))$ as a matrix with respect to the above decomposition, along with the monomial bases of each cohomology group. The column of this matrix corresponding to $\a_im$ and a monomial $z$ in the Cox ring $S = k[x_0, \dots, x_r, y_0, \dots,y_s]$ of $X$ of degree $(b - d_1^m - 1, c-d_2^m)$ has exactly two nonzero entries:
  \begin{itemize}
  \item an entry of $1$ for $u_2m \in M_{r, s}$ and $x_i z \in H^0(X, \OO(b-d_1^{u_2m}, c - d_2^{u_2m}))$;
  \item an entry of $-x_i'$ for $u_0m \in M_{r, s}$ and $z \in  H^0(X, \OO(b-d_1^{u_0m}, c - d_2^{u_0m}))$.
  \end{itemize}
  Similarly, the column corresponding to $\b_im$ and a monomial $w \in S$ of degree $(b-d_1^m + a_i, c-d_2^m - 1)$ has exactly two nonzero entries:
  \begin{itemize}
  \item an entry of $1$ for $u_3m$ and $y_iw\in H^0(X, \OO(b-d_1^{u_3m}, c - d_2^{u_3m}))$;
  \item an entry of $-y_i'$ for $u_0^{a_s - a_i}u_1u_2^{a_i}m$ and $w \in H^0(X, \OO(b-d_1^{u_0^{a_s - a_i}u_1u_2^{a_i}m}, c - d_2^{u_0^{a_s - a_i}u_1u_2^{a_i}m}))$.
  \end{itemize}
  That is, the first differential on $\B(\OO(b, c))$ has the following form:
  \begin{footnotesize} \[
  \begin{pNiceMatrix}[last-col,last-row,nullify-dots]
         &  0    &        &  0    &        & \Vdots \\
         & -x_i' &        &  0    &        & \;\;\;\;\;z \otimes u_0m \\
         &  0    &        &  0    &        & \Vdots \\
         &  0    &        & -y_i' &        & \;\;\;\;w \otimes u_0^{a_s - a_i}u_1u_2^{a_i}m \\
  \cdots\hspace*{-8mm} & 0 & \hspace*{-8mm}\cdots\hspace*{-8mm} & 0 & \hspace*{-8mm}\cdots & \Vdots \\
         &  1    &        &  0    &        & \;\,x_iz \otimes u_2m \\
         &  0    &        &  0    &        & \Vdots \\
         &  0    &        &  1    &        & \;y_iw \otimes u_3m \\
         &  0    &        &  0    &        & \Vdots \\
         &  z \otimes \a_i m & \hspace*{-8mm}\cdots\hspace*{-8mm} & w \otimes \b_i m &  &
  \end{pNiceMatrix}.
  \] \end{footnotesize}
  
\noindent Now observe: every column of this matrix contains exactly one ``1", and there is exactly one row that does not contain a ``1": namely, the row corresponding to the summand
$H^0(X, \OO) \otimes \OO(b, c) \cdot u_0^{b+ca_s}u_1^cu_2^{r-b}u_3^{s-c}.$
It follows immediately that the cokernel of this matrix is isomorphic to the summand $H^0(X, \OO) \otimes \OO(b, c)$, and the multiplication map induced by $\nu$ from this summand to $\OO(b, c)$ is clearly an isomorphism.
\end{proof}

\begin{remark}\label{rem:bigres}
  Our construction of the resolution $\cR$ realizes it as a subcomplex of the (infinite rank) resolution of the diagonal obtained in \cite[Thm.~4.1]{BE21} and therefore yields a positive answer to \cite[Conj.~7.2]{BE21} for smooth projective toric varieties of Picard rank 2.
\end{remark}

\begin{corollary}\label{cor:monad}
  Given a coherent sheaf $\F$ on $X$, we have $\B(\F) \cong \F$ in $\Db(X)$.
\end{corollary}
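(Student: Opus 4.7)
The plan is to chase the definitions and directly invoke what has already been established. Recall from Section~\ref{sec:FM} that $\B(\F)$ was constructed precisely as a model for the Fourier--Mukai transform $\Phi_\cR(\F)$: the bicomplex \eqref{eq:bicomplex} computes $\RR\pi_{2*}(\pi_1^*\F \otimes \cR) = \Phi_\cR(\F)$, and $\B(\F)$ is homotopy equivalent to its totalization via the application of \cite[Lem.~3.5]{EFS03}. In particular, $\B(\F) \cong \Phi_\cR(\F)$ in $\Db(X)$.

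Next, I would invoke Theorem~\ref{thm:main}(4), which asserts that $\cR$ is a resolution of $\OO_\Delta$. More precisely, the proof in Section~\ref{sec:proof-subsection} produces an explicit chain map $\nu\colon\cR\to\OO_\Delta$ whose induced natural transformation $\Phi_\nu \colon\Phi_\cR\to\Phi_{\OO_\Delta}$ is a natural isomorphism of Fourier--Mukai transforms. Evaluating $\Phi_\nu$ at $\F$ gives an isomorphism $\Phi_\cR(\F)\cong\Phi_{\OO_\Delta}(\F)$ in $\Db(X)$.

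Finally, it is standard (see \cite[\S5.1]{Huybrechts2006}) that the Fourier--Mukai transform with kernel $\OO_\Delta$ is the identity functor on $\Db(X)$, so $\Phi_{\OO_\Delta}(\F)\cong\F$. Concatenating the three isomorphisms yields $\B(\F)\cong\F$ in $\Db(X)$, as required.

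There is no real obstacle here: the corollary is essentially a bookkeeping consequence of Theorem~\ref{thm:main}, together with the observation that $\B(-)$ was designed from the outset to be a concrete model for $\Phi_\cR(-)$. The only thing worth being careful about is the ambiguity of $\B(\F)$ up to homotopy equivalence (arising from the choice of splittings of the columns of \eqref{eq:bicomplex}), which is harmless at the level of objects of $\Db(X)$.
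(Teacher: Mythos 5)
Your proof is correct and is exactly the argument the paper intends: the corollary is stated without proof precisely because it follows immediately from Theorem~\ref{thm:main}(4) (via the natural isomorphism $\Phi_\nu\colon\Phi_\cR\to\Phi_{\OO_\Delta}$ established in \S\ref{sec:proof-subsection}) together with the identification of $\B(\F)$ as a model for $\Phi_\cR(\F)$ built into the construction of \S\ref{sec:FM}. Your closing remark about the homotopy-equivalence ambiguity being harmless at the level of objects of $\Db(X)$ is also the right thing to note.
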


\begin{corollary}\label{cor:cK}
  Consider the ideal $I = (\a_0, \dots, \a_r, \b_0, \dots, \b_s) \subseteq S_E$, and let $\mathcal{D}$ denote the sheaf $\widetilde{S_E/I}$ on $E$. We have an isomorphism $\pi_*\mathcal{D}(0,0,0,0,r,s)\cong \OO_\Delta$ of sheaves on $X \times X$.
\end{corollary}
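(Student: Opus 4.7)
The plan is to combine three ingredients that have already been established in the excerpt: (a) the elements $\a_0,\dots,\a_r,\b_0,\dots,\b_s$ form a regular sequence in $S_E$, so the Koszul complex $\cK$ is a locally free resolution of $\mathcal{D}$; (b) Proposition~\ref{prop:higherdirect} says that every term of $\cK(0,0,0,0,r,s)$ is $\pi_*$-acyclic; and (c) Theorem~\ref{thm:main}(4) says that $\cR = \pi_*\cK(0,0,0,0,r,s)$ is a resolution of $\OO_\Delta$. The role of this corollary is essentially to repackage (a)--(c) into a single clean statement: not only is the derived pushforward of $\cK(0,0,0,0,r,s)$ a resolution of $\OO_\Delta$, but the derived pushforward of the \emph{sheaf} $\mathcal{D}(0,0,0,0,r,s)$ already equals $\OO_\Delta$, concentrated in degree zero.

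Concretely, I would proceed as follows. Twisting the quasi-isomorphism $\cK \xrightarrow{\simeq} \mathcal{D}$ by $\OO_E(0,0,0,0,r,s)$ gives a quasi-isomorphism
\[
\cK(0,0,0,0,r,s) \xrightarrow{\;\simeq\;} \mathcal{D}(0,0,0,0,r,s).
\]
Applying $\RR\pi_*$ to both sides, I would then invoke the standard fact that if every term of a bounded complex of coherent sheaves has vanishing higher direct images, then the derived pushforward of the complex is computed termwise by $\pi_*$. Proposition~\ref{prop:higherdirect} provides exactly this input on the left-hand side, so
\[
\RR\pi_*\mathcal{D}(0,0,0,0,r,s)\;\cong\;\RR\pi_*\cK(0,0,0,0,r,s)\;\cong\;\pi_*\cK(0,0,0,0,r,s)\;=\;\cR.
\]
Finally, Theorem~\ref{thm:main}(4) identifies $\cR$ with $\OO_\Delta$ in $\Db(X\times X)$.

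To conclude the sheaf-level statement (as opposed to a derived-category statement), I would observe that $\OO_\Delta$ is concentrated in cohomological degree zero. Comparing cohomology sheaves of the two sides of the displayed isomorphism forces $R^i\pi_*\mathcal{D}(0,0,0,0,r,s)=0$ for all $i>0$ and $\pi_*\mathcal{D}(0,0,0,0,r,s)\cong \OO_\Delta$, which is exactly the claimed isomorphism.

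There is no serious obstacle here; the only thing that requires a little care is the degeneration step in the middle, i.e.\ passing from the termwise $\pi_*$-acyclicity of Proposition~\ref{prop:higherdirect} to the quasi-isomorphism $\RR\pi_*\cK(0,0,0,0,r,s)\simeq \pi_*\cK(0,0,0,0,r,s)$. This is a routine application of the hyper-derived functor spectral sequence $E_2^{p,q} = R^p\pi_*\cK_{-q}(0,0,0,0,r,s) \Rightarrow R^{p+q}\pi_*\mathcal{D}(0,0,0,0,r,s)$, which collapses to the row $p=0$ by Proposition~\ref{prop:higherdirect}.
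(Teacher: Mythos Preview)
Your proposal is correct and follows essentially the same route as the paper's proof: both use that the regular sequence makes $\cK$ a resolution of $\mathcal{D}$, invoke Proposition~\ref{prop:higherdirect} to replace $\RR\pi_*$ by $\pi_*$ on the twisted Koszul complex, and then apply Theorem~\ref{thm:main}(4). You have simply been more explicit about the derived-category bookkeeping (the degeneration step and the reduction from the derived statement to the sheaf-level one), which the paper compresses into the single chain $\pi_*\mathcal{D}(0,0,0,0,r,s)\cong\pi_*\cK(0,0,0,0,r,s)\cong\cR\cong\OO_\Delta$.
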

\begin{proof}
Recall that $\cK$ is the sheafification of the Koszul complex on the generators of $I$, which form a regular sequence. Therefore $\cK$ is a locally free resolution of $\mathcal{D}$, and using Proposition~\ref{prop:higherdirect} and Theorem~\ref{thm:main}(4) we have $\pi_*\mathcal{D}(0,0,0,0,r,s)\cong\pi_*\cK(0,0,0,0,r,s)\cong\cR\cong\OO_\Delta$.
\end{proof}

We will now prove Conjecture~\ref{conj:BES} for $X$ as in Theorem~\ref{thm:main}.

\begin{proof}[Proof of Corollary~\ref{cor:rank2}]
  Our proof is nearly the same as that of \cite[Prop.~1.2]{BES20}. Given a finitely generated graded module $M$ over the Cox ring of $X$, let $\F$ be the associated sheaf on $X$. Applying the Fujita Vanishing Theorem, choose $i,j\gg0$ such that, for all summands $\cL_1\boxtimes\cL_2$ of the resolution of the diagonal $\cR$ from Theorem~\ref{thm:main}, we have $H^q(X, \F(i,j)\otimes\cL_1) = 0$ for $q>0$. The complex $\B(\F(i,j))$ is a resolution of $\F(i,j)$ of length at most $\dim(X)$ consisting of finite sums of line bundles, and twisting back by $(-i, -j)$ gives a resolution of $\F$. Now applying the functor $\G \mapsto \bigoplus_{(k, \ell) \in \ZZ^2} H^0(X, \G(k, \ell))$ to the complex $\B(\F(i, j))(-i,-j)$ gives a virtual resolution of $M$.
\end{proof}

\section{A Horrocks-type splitting criterion}\label{sec:horrocks}

Let $X$ denote the projective bundle $\PP(\OO \oplus \OO(a_1) \oplus \cdots \oplus \OO(a_s))$ over $\PP^r$, where $a_1 \le \cdots \le a_s$. Given a coherent sheaf $\F$ on $X$, let $\B(\F)$ be the complex of sheaves on $X$ defined in \S\ref{sec:FM}. Recall from the introduction the notation $\g(\F)$ for the cohomology table of $\F$. We will need the following technical result.

\begin{lemma}[{cf.~\cite[Lem.~7.3]{EES15}}]\label{lem:induction}
  Let $\E$ be a vector bundle on $X$, and suppose we have $\g(\E) = \g(\OO^m) + \g(\E')$ for some vector bundle $\E'$ on $X$ with $\B(\E')_1 = 0$. There is an isomorphism $\E \cong \OO^m \oplus \E''$ for some vector bundle $\E''$ such that $\g(\E'') = \g(\E')$.
\end{lemma}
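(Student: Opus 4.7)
The plan is to use the Beilinson monad to extract a trivial direct summand of $\E$. Concretely, I will construct a split embedding $\phi\colon\OO^m\hookrightarrow\E$ and set $\E''=\operatorname{coker}(\phi)$; the equality $\g(\E'')=\g(\E')$ will then follow from the additivity of cohomology tables applied to the short exact sequence $0\to\OO^m\to\E\to\E''\to 0$.

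The first step is to observe that, by formula~\eqref{eq:BM}, the graded sheaf underlying each term $\B(\F)_k$ is determined by $\g(\F)$ and depends on it additively, since the coefficients appearing are the dimensions $\dim_\kk H^j(X,\F\otimes\cL_1)$. Thus the hypothesis $\g(\E)=\g(\OO^m)+\g(\E')$ yields isomorphisms of graded sheaves $\B(\E)_k\cong\B(\OO^m)_k\oplus\B(\E')_k$ for all $k$, and in particular $\B(\E)_1\cong\B(\OO^m)_1$ since $\B(\E')_1=0$. By Corollary~\ref{cor:monad}, each of the complexes $\B(\E)$, $\B(\OO^m)$, and $\B(\E')$ has cohomology concentrated in degree $0$, where it recovers $\E$, $\OO^m$, and $\E'$ respectively.

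Next, I would locate a distinguished $\OO^m$-summand of $\B(\E)_0$. The summand $\OO\boxtimes\OO$ appears in $\cR_0$ (corresponding to the monomial $u_2^r u_3^s\in M_{r,s}$ in Notation~\ref{not:monomials}), so $H^0(X,\OO)^m\otimes\OO\cong\OO^m$ is a summand of $\B(\OO^m)_0$, and hence—via the graded splitting above—of $\B(\E)_0$. The heart of the argument is promoting this to a split inclusion $\B(\OO^m)\hookrightarrow\B(\E)$ of \emph{complexes}, since a priori the differentials of $\B(\E)$ need not respect the graded splitting coming from $\g$. This is where the hypothesis $\B(\E')_1=0$ plays its essential role: because $\B(\E)_1$ has no $\E'$-contribution, the differential $\B(\E)_1\to\B(\E)_0$ must, after choosing compatible splittings in the construction of Section~\ref{sec:FM} and using the explicit form of the first differential of $\cR$ from Example~\ref{ex:firstdiff}, land in the $\B(\OO^m)_0$-summand modulo a piece whose image in $H^0$ vanishes; consequently $\B(\OO^m)$ sits inside $\B(\E)$ as a direct summand subcomplex.

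The step I expect to be the main obstacle is precisely this last point: upgrading the graded decomposition to a splitting at the level of complexes. Once that is accomplished, taking cohomology in degree zero yields a canonical split injection $\OO^m=H^0(\B(\OO^m))\hookrightarrow H^0(\B(\E))=\E$, which is the desired $\phi$. Setting $\E''=\operatorname{coker}(\phi)$, the sequence $0\to\OO^m\to\E\to\E''\to 0$ splits, and additivity of cohomology tables gives $\g(\E'')=\g(\E)-\g(\OO^m)=\g(\E')$, completing the argument.
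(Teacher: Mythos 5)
Your overall strategy is the right one and is close in spirit to the paper's, but the step you yourself flag as ``the main obstacle'' is a genuine gap, and the justification you sketch for it does not work. The decomposition $\B(\E)_k \cong \B(\OO^m)_k \oplus \B(\E')_k$ is only an abstract isomorphism of sheaves obtained from an equality of dimensions $h^j(\E\otimes\cL_1) = m\,h^j(\cL_1) + h^j(\E'\otimes\cL_1)$; there is no map of complexes $\B(\OO^m)\to\B(\E)$ to ``promote,'' so the claim that $\B(\OO^m)$ sits inside $\B(\E)$ as a direct summand subcomplex is exactly what must be constructed, not checked. Worse, the differential you analyze is the wrong one: since $\B(\E)_1 \cong \B(\OO^m)_1 \oplus \B(\E')_1 = 0$ (note $\B(\OO)_1=0$ because $E_1^{-i,j}(\OO)=0$ off $(i,j)=(0,0)$, a fact you should record), the map $\B(\E)_1\to\B(\E)_0$ is trivially zero and carries no information. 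The content lies in the other direction: one has $\E = H_0(\B(\E)) = \ker\bigl(\del_0\colon \B(\E)_0\to\B(\E)_{-1}\bigr)$, and you must show (i) that a rank-$m$ trivial summand $\OO^m\subseteq H^0(X,\E)\otimes\OO\subseteq\B(\E)_0$ lies in this kernel, and (ii) that the resulting inclusion $\OO^m\hookrightarrow\E$ admits a retraction. Neither point is addressed; note that (i) is not automatic from ``$\operatorname{Hom}(\OO,\cL_2)=0$'' considerations, since the twists $\cL_2$ appearing in $\B(\E)_{-1}$ do have global sections, and (ii) does not follow merely from having an injection of sheaves.

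The paper resolves precisely these two points with the spectral sequence $E_1^{-i,j}(\E)=\RR^j\pi_{2*}(\pi_1^*\E\otimes\cR_i)\Rightarrow\E$ of the bicomplex whose totalization is $\B(\E)$: additivity of the $E_1$ page in $\g$ gives $E_1^{0,0}(\E)=\OO^m\oplus E_1^{0,0}(\E')$; differentials out of $E_r^{0,0}$ vanish for degree reasons (they would land in a column $\cR_{-r}=0$); and the hypothesis $\B(\E')_1=0$ kills the entire $k=1$ diagonal, hence all differentials into $E_r^{0,0}$. Thus $E_\infty^{0,0}=E_1^{0,0}$, and $\OO^m$ splits off $\E$. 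The point of the spectral sequence formalism is that you never need the differentials of $\B(\E)$ to respect your ad hoc graded splitting globally --- you only need to control what enters and leaves the $(0,0)$ position. If you want to stay with the totalized complex as you propose, you would have to reprove this bookkeeping by hand (e.g., observing that the EFS differential restricted to $E_1^{0,0}$ vanishes because every summand of it factors through the horizontal differential out of column $0$), and you would still owe an argument for the splitting in (ii); as written, your proof does not close either gap.
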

\begin{proof}
  Let $\cR$ be the resolution of the diagonal for $X$ constructed in \S\ref{sec:bundle}. We have a Beilinson-type spectral sequence
  \[ E_1^{-i,j}(\E) = \RR^j{\pi_2}_*(\pi_1^*\E\otimes\cR_i) \Rightarrow \RR^{i-j}{\pi_2}_*(\pi_1^*\E\otimes\cR)
  \cong \begin{cases}
    \E, & i = j; \\
    0, & i \ne j. 
  \end{cases} \]
  The first page looks as follows:
  \begin{equation}\label{eq:derived-E1}
    \begin{tikzcd}[column sep=small, row sep=tiny]
	{} & \vdots & \vdots & \vdots & {} \\[-5pt]
	{} & {\RR^2{\pi_2}_*(\pi_1^*\E\otimes\cR_0)} & {\RR^2{\pi_2}_*(\pi_1^*\E\otimes\cR_1)} &     {\RR^2{\pi_2}_*(\pi_1^*\E\otimes\cR_2)} & \cdots \\
	{} & {\RR^1{\pi_2}_*(\pi_1^*\E\otimes\cR_0)} & {\RR^1{\pi_2}_*(\pi_1^*\E\otimes\cR_1)} & \red{\RR^1{\pi_2}_*(\pi_1^*\E\otimes\cR_2)} & \cdots \\
	{} & {   {\pi_2}_*(\pi_1^*\E\otimes\cR_0)} & \red{ {\pi_2}_*(\pi_1^*\E\otimes\cR_1)} &     {     {\pi_2}_*(\pi_1^*\E\otimes\cR_2)} & \cdots \\
	{} & {} & {}
	\arrow[from=4-4, to=4-3]
	\arrow[from=4-3, to=4-2]
	\arrow[from=4-5, to=4-4]
	\arrow[from=3-4, to=3-3]
	\arrow[from=3-5, to=3-4]
	\arrow[from=2-5, to=2-4]
	\arrow[from=2-4, to=2-3]
	\arrow[from=2-3, to=2-2]
	\arrow[from=3-3, to=3-2]
  \arrow[crossing over, shift left=17, shorten <=-28pt, shorten >=-18pt, dashed, no head, from=4-2, to=1-2]
	\arrow[shift right=5, shorten <=-87pt, shorten >=-20pt, dashed, no head, from=4-2, to=4-5]
	\arrow["{k=2}"{description, sloped, pos=0.70}, shift left=4, shorten >=70pt, dotted, no head, from=3-5, to=5-3]
	\arrow["{k=1}"{description, sloped, pos=0.81}, shift left=4, shorten >=70pt, dotted, no head, from=2-5, to=5-2]
	\arrow["{k=0}"{description, sloped, pos=0.95}, shift left=1, shorten >=30pt, dotted, no head, from=1-5, to=5-1]
	\arrow[shift right=2, shorten <=25pt, shorten >=23pt, dotted, no head, from=1-3, to=3-1]
	\arrow[shift right=2, shorten <=25pt, shorten >=23pt, dotted, no head, from=1-4, to=4-1]
    \end{tikzcd}
  \end{equation}
  Notice that $E_1^{-i,j}(\E) \cong \bigoplus H^j(X, \E\otimes\cL_1)\otimes\cL_2$, where the direct sum ranges over the summands $\cL_1\boxtimes\cL_2$ of $\cR_i$. It follows that $\B(\E)_1 = \bigoplus_{i-j=1} E_1^{-i,j}(\E)$. Moreover, since the terms of the first page only depend on $\gamma(\E)$, we have
  \[ E_1^{-i,j}(\E) = E_1^{-i,j}(\OO)^m \oplus E_1^{-i,j}(\E'). \]
Observe that $E_1^{0,0}(\OO) = \OO$, and $E_1^{-i,j}(\OO) = 0$ when either $i\ne0$ or $j\ne0$. In particular, we have $E_1^{0,0}(\E) = \OO^m \oplus E_1^{0,0}(\E')$, and it follows from the hypothesis $\B(\E')_1=0$ that the terms along the $k=1$ diagonal in \eqref{eq:derived-E1} (colored in red) vanish. Thus, every differential in the spectral sequence with either source or target given by $E_r^{0,0}(\E)$ for some $r$ vanishes. We conclude that $\OO^m$ is a summand of $E_{\infty}^{0,0}(\E)$, and hence $\E$ as well. 
\end{proof}

\begin{remark}
 A similar technique was recently utilized by Bruce, Cranton Heller, and the second author in \cite{BCHS21} to give a characterization of multigraded Castelnuovo--Mumford regularity on products of projective spaces. An interesting question is whether there is a similar result for smooth projective varieties of Picard rank 2 using the resolution $\cR$.
\end{remark}

We will now prove our splitting criterion.

\begin{proof}[Proof of Theorem~\ref{thm:splitting}]
By induction, it suffices to show that $\OO(b_1, c_1)^{m_1}$ is a summand of $\E$. Without loss of generality, we may assume $(b_2, c_2) < (b_1, c_1)$. We may also twist $\E$ so that $b_1=c_1=0$, which implies $(b_i, c_i) < 0$ for all $i > 1$. Suppose $(a, b) < 0$. By Lemma~\ref{lem:induction}, it suffices to show that $\B(\OO(a, b))_1 = 0$. This amounts to showing that, for $0<n\le r+s$, we have
\begin{equation}\label{eq:condition}
  H^{n-1}(X, \OO(a, b)\otimes\cL_1)=0 \text{ when } \cL_1 \boxtimes \cL_2 \text{ is a summand of }\cR_{n}.
\end{equation}
By Corollary~\ref{cor:vanishing}(1)(a), we need only show that \eqref{eq:condition} holds for $n \in \{1,r+1,s+1\}$. We recall that any summand of $\cR_n$ corresponds to a monomial of the form
\begin{equation}\label{eq:summand}
  \a_{i_1} \cdots \a_{i_e}\cdot \b_{j_1} \cdots \b_{j_f} \cdot m,
\end{equation}
where $e + f = n$, and $m \in M_{r - e, s - f}$ (using Notation~\ref{not:monomials}). Writing $m = u_0^{c_0} u_1^{c_1} u_2^{c_2} u_3^{c_3}$, we have that the summand of $\cR_n$ corresponding to \eqref{eq:summand} is $\OO(-e - d_1, -f - d_2, d_1, d_2)$, where $d_1 = c_0 -a_sc_1$ and $d_2 = c_1$. In particular, we have $0 \le d_2 \le s-f$, which immediately implies that $-s \le -f - d_2 \le 0$. When $-f -d_2 < 0$, \eqref{eq:condition} holds for $n \in \{1, r+1,s+1\}$ by Corollary~\ref{cor:vanishing}(1)(b - d). Suppose $-f - d_2 = 0$. Since $f, d_2 \ge 0$, we have $f = 0 = d_2$. It follows that $e = n$ and $d_1 = c_0 \ge 0$. Corollary~\ref{cor:vanishing}(1)(b) therefore implies that \eqref{eq:condition} holds when $n = 1$. Corollary~\ref{cor:vanishing}(1)(c) implies that \eqref{eq:condition} holds for $n = s+1$ when $r \ne s$; we may thus reduce to the case where $n = r+1$. But this case cannot occur, since there is no $m \in M_{-1, s}$ of the form $u_0^{c_0} u_2^{c_2} u_3^{c_3}$.
\end{proof}

\begin{remark}
  If we replace the nef ordering with the effective ordering in the statement of Theorem~\ref{thm:splitting}, our proof fails. The problem arises in the final step: there exist line bundles $\OO(b, c) \in \Pic{X}$ such that $-(b, c)$ is effective but $\B(\OO(b, c))_1 \ne 0$. For instance, over a Hirzebruch surface of type $a$, the divisor $-(a, -1)$ is effective, and $\B(\OO(a, -1))_1 \ne 0$.
\end{remark}


\let\o\slasho 

\bibliography{references}

\end{document}